  \theoremstyle{plain}
  \newtheorem{thm}{Theorem}
  \theoremstyle{definition}
  \newtheorem{defn}{Definition}
  \theoremstyle{plain}
  \newtheorem{lem}{Lemma}
   \newenvironment{proof}[1][\proofname]{\par
     \normalfont\topsep6\p@\@plus6\p@\relax
     \trivlist
     \itemindent\parindent
     \item[\hskip\labelsep
           \scshape
       #1]\ignorespaces
   }{%
     \endtrivlist\@endpefalse
   }
   \providecommand{\proofname}{Proof}
  \theoremstyle{plain}
  \newtheorem{cor}{Corollary}
  \theoremstyle{remark}
  \newtheorem{rem}{Remark}
  \theoremstyle{definition}
  \newtheorem{example}{Example}
\subjclass[2010]{Primary 26A36, 26A39, 26A46; Secondary 26A30}
\begin{document}
\title[Negligible Variation and the Change of Variables Theorem]{Negligible Variation and\\the Change of Variables Theorem}

\author{Vermont Rutherfoord and Yoram Sagher}
\begin{abstract}
In this note we prove a necessary and sufficient condition for the
change of variables formula for the HK integral, with implications
for the change of variables formula for the Lebesgue integral. As
a corollary, we obtain a necessary and sufficient condition for the
Fundamental Theorem of Calculus to hold for the HK integral.
\end{abstract}

\address{Vermont Rutherfoord, Yoram Sagher\\
Florida Atlantic University\\
Department of Mathematical Sciences\\
777 Glades Road\\
Boca Raton, FL 33431}

\email{vermont.rutherfoord@gmail.com\\
yoram.sagher@gmail.com}

\keywords{change of variables, Henstock-Kurzweil integral, absolute continuity,
negligible variation}

\maketitle
J. Serrin and D.E. Varberg \cite{key-2} proved the following change
of variables theorem for the Lebesgue integral.
\begin{thm}
Assume that $g:\left[a,b\right]\rightarrow\mathbb{R}$ is differentiable%
\footnote{Unless otherwise noted, we use differentiable to mean finitely differentiable.%
} almost everywhere and that $f:\mathbb{R}\rightarrow\mathbb{R}$ is
Lebesgue integrable on $\left[c,d\right]\supseteq g\left(\left[a,b\right]\right)$.
Then $\left(f\circ g\right)\cdot g'$ is Lebesgue integrable on $\left[a,b\right]$
and the change of variables formula\begin{equation}
\int_{g\left(\alpha\right)}^{g\left(\beta\right)}f\left(u\right)du=\int_{\alpha}^{\beta}f\left(g\left(s\right)\right)g'\left(s\right)ds\label{eq:cov1}\end{equation}
holds for all $\alpha$, $\beta$ in $\left[a,b\right]$ if and only
if $F\circ g$ is absolutely continuous, where $F\left(x\right):=\int_{g\left(a\right)}^{x}f\left(u\right)du$.
\end{thm}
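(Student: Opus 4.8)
The plan is to recognize \eqref{eq:cov1} as a statement of the Fundamental Theorem of Calculus for the single function $H := F\circ g$. Since $\int_{g(\alpha)}^{g(\beta)} f(u)\,du = F(g(\beta)) - F(g(\alpha)) = H(\beta) - H(\alpha)$, the formula \eqref{eq:cov1} is equivalent to the identity $H(\beta)-H(\alpha) = \int_\alpha^\beta (f\circ g)(s)\,g'(s)\,ds$ holding for all $\alpha,\beta$. Here $F$ is an indefinite Lebesgue integral, hence absolutely continuous with $F'=f$ almost everywhere. The backward implication is then immediate: if \eqref{eq:cov1} holds and $(f\circ g)\cdot g'$ is integrable, setting $\alpha=a$ exhibits $H$ as a constant plus the indefinite integral of an $L^1$ function, so $H$ is absolutely continuous.

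For the forward implication I would assume $H$ absolutely continuous and prove that $H'(s) = f(g(s))\,g'(s)$ for almost every $s$; granting this, $(f\circ g)\cdot g'$ is integrable because it agrees a.e.\ with the integrable function $H'$, and the Fundamental Theorem of Calculus for absolutely continuous functions delivers \eqref{eq:cov1}. I would work on the full-measure set where $g$ and $H$ are both differentiable, and let $B\subseteq[c,d]$ denote the null set on which $F$ fails to be differentiable with derivative $f$. The argument splits according to the value of $g'$: on $\{g'\neq 0\}$ the chain rule gives the conclusion once we know $g(s)\notin B$ for a.e.\ such $s$, while on $\{g'=0\}$ the right-hand side vanishes and it remains only to show $H'=0$ a.e.\ there.

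The crux, and the main obstacle, is that $g^{-1}(B)$ may have positive measure, so $F$ can be non-differentiable along a substantial portion of the $g$-image and the chain rule is unavailable on that set. I would resolve this with two covering lemmas, both provable by the same device: on a piece of a set where $|g'|\ge c>0$ whose diameter is small compared to the differentiability scale, $g$ is injective with a $(1/c)$-Lipschitz inverse, so $g$ cannot pull a null set back to a set of positive measure. The first lemma, applied to $B$, yields $g'=0$ almost everywhere on $g^{-1}(B)$; in particular $g(s)\notin B$ for a.e.\ $s$ with $g'(s)\neq0$, completing the $\{g'\neq0\}$ case. The second lemma asserts that a function differentiable on a set with null image has vanishing derivative almost everywhere on that set. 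To apply it on $S:=\{g'=0\}$, I would first note $m^\ast(g(S))\le\int_S|g'|=0$, then use that the absolutely continuous $F$ enjoys the Luzin property $(N)$ to conclude that $H(S)=F(g(S))$ is null; the lemma then forces $H'=0$ a.e.\ on $S$. Combining the two regions gives $H'=(f\circ g)\cdot g'$ almost everywhere, which is exactly what is needed.
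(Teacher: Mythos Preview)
Your argument is correct and is essentially the original Serrin--Varberg proof; note, however, that the paper does not supply a proof of this theorem at all---it is quoted from \cite{key-2} as background, and the paper's own contributions are the HK analogues (Theorem~\ref{thm:subs1} and Corollary~\ref{cor:lastcor}). The key lemma you isolate (a function differentiable on a set with null image has derivative zero a.e.\ there) is exactly what the paper records as Theorem~\ref{thm:svzero}, and your two applications of it---once to $g$ on $g^{-1}(B)$, once to $H$ on $\{g'=0\}$---mirror Serrin and Varberg's own route.

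One step deserves a slightly more careful justification: the inequality $m^\ast(g(S))\le\int_S|g'|$ for $S=\{g'=0\}$ is true whenever $g$ is differentiable at each point of $S$, but it is a lemma in its own right (proved by a Vitali covering argument, decomposing $S$ according to the scale on which the difference quotient is small). Since $g$ is not assumed absolutely continuous on $[a,b]$, you cannot simply invoke the total-variation formula for the image measure; you should either cite this lemma explicitly or give the short covering proof. With that caveat, the proposal is complete.
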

K. Krzyzewski \cite{key-5} and G. Goodman \cite{key-7} proved several
sufficient but not necessary conditions for \eqref{eq:cov1} to hold
for the Denjoy and Perron integrals. Both integrals are equivalent
to Henstock-Kurzweil (HK) integral, for which we establish necessary
and sufficient conditions for the change of variables theorem. As
a consequence, we obtain the optimal condition for which \eqref{eq:cov1}
holds for a fixed $\alpha,\beta$ without the requirement that it
holds for every subinterval. Furthermore, we show that even when $\int_{g\left(\alpha\right)}^{g\left(\beta\right)}f\left(u\right)du$
is a Lebesgue integral,  \eqref{eq:cov1} holds under weaker conditions
than those of Theorem 1.

\section*{Preliminaries}

For excellent presentations of the HK integral, see \cite{key-1},
\cite{key-4}, and \cite{key-6}. For the reader's convenience, we
include the basic definitions necessary to follow the exposition below.

We denote the closed interval $\left[a_{j},b_{j}\right]$ as $I_{j}$
and $\left|I_{j}\right|=b_{j}-a_{j}$.

\begin{defn}
A \emph{tagged partition $P$} \emph{of $\left[a,b\right]$}, denoted
$P\left[a,b\right]$, is a finite set of the form $\left\{ \left(x_{j},I_{j}\right):1\le j\le n\right\} $
such that $\bigcup_{j=1}^{n}I_{j}=\left[a,b\right]$, $x_{j}\in I_{j}$
for all $j$, and $i\ne j$ implies that $\left(a_{i},b_{i}\right)\cap\left(a_{j},b_{j}\right)=\emptyset$.
\end{defn}
\begin{defn}
A \emph{gauge} for the interval $\left[a,b\right]$ is a function
from $\left[a,b\right]$ to the positive real numbers, $\mathbb{R}^{+}$.
\end{defn}
\begin{defn}
A tagged partition $P\left[a,b\right]=\left\{ \left(x_{j},I_{j}\right):1\le j\le n\right\} $
is said to be \emph{subordinate} to the gauge $\delta$ if $I_{j}\subseteq\left(x_{j}-\delta\left(x_{j}\right),x_{j}+\delta\left(x_{j}\right)\right)$
for every $j$.
\end{defn}
\begin{defn}
Given $f:\left[a,b\right]\rightarrow\mathbb{R}$ and a tagged partition
$P\left[a,b\right]=\left\{ \left(x_{j},I_{j}\right):1\le j\le n\right\} $,
$R_{P}f:=\sum_{j=1}^{n}f\left(x_{j}\right)\cdot\left|I_{j}\right|$
is called their \emph{Riemann sum}.
\end{defn}
\begin{defn}
A function $f:\left[a,b\right]\rightarrow\mathbb{R}$ is said to be
\emph{HK integrable} over $\left[a,b\right]$ if there exists a number,
$(HK)\int_{a}^{b}f\left(x\right)dx$, so that for any $\epsilon>0$
there exists a gauge $\delta$ on $\left[a,b\right]$ such that any
tagged partition $P\left[a,b\right]$ subordinate to $\delta$ satisfies
$\left|R_{P}f-(HK)\int_{a}^{b}f\left(x\right)dx\right|<\epsilon$.
We also define $(HK)\int_{b}^{a}f\left(x\right)dx=-(HK)\int_{a}^{b}f\left(x\right)dx$.
\end{defn}

Since two integrands which differ only on a set of measure zero have
the same HK integral \cite[Theorem 2.5.6]{key-4}, we adopt the convention
that, given a function $f$ that is defined almost everywhere, its
HK integral is the integral of the function that is equal to $f$
where $f$ is defined, and is $0$ where $f$ is not defined.

\begin{defn}
A function $f$ has \emph{negligible variation} on a set $E\subseteq\left[a,b\right]$
if for any $\epsilon>0$ there exists a gauge $\delta$ on $\left[a,b\right]$
such that for any tagged partition $P\left[a,b\right]=\left\{ \left(x_{j},I_{j}\right):1\leq j\leq n\right\} $
subordinate to $\delta$, $\sum_{x_{j}\in E}\left|f\left(b_{j}\right)-f\left(a_{j}\right)\right|<\epsilon$.
\end{defn}
From this point on, we will denote $\Delta_{j}f=f\left(b_{j}\right)-f\left(a_{j}\right)$.

The definition of negligible variation and a broad range of applications
was introduced by \cite{key-9}. One of them is the following theorem,
which was proven in the following necessary and sufficient form in
Theorem 5.12 of \cite{key-1}.
\begin{thm}
[Fundamental Theorem of Calculus for the HK Integral]\label{thm:bartle}\textup{~}\\
\textup{$F\left(x\right)-F\left(a\right)=(HK)\int_{a}^{x}f\left(s\right)ds$}
for all $x\in\left[a,b\right]$ if and only if there exists a set
$E\subseteq\left[a,b\right]$ such that $F'\left(x\right)=f\left(x\right)$
for all $x\in E$ and $\left[a,b\right]\backslash E$ is a set of
measure zero on which $F$ has negligible variation.
\end{thm}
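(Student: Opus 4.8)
The plan is to prove the two implications separately; in both the essential device is to build a gauge on the null exceptional set by stratifying that set according to the size of $f$ (which is finite everywhere under the stated convention).

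For sufficiency, suppose $E$ exists and write $N=[a,b]\setminus E$. Fix $\epsilon>0$ and construct a gauge $\delta$ in two pieces. On $E$, since $F'(x)=f(x)$, the straddle lemma lets me choose $\delta(x)>0$ so that $|\Delta F-f(x)|I||\le\epsilon|I|$ for every interval $I\ni x$ contained in $(x-\delta(x),x+\delta(x))$. On $N$, I stratify $N=\bigcup_{k\ge1}N_{k}$ with $N_{k}=\{x\in N:k-1\le|f(x)|<k\}$, cover each $N_{k}$ by an open set $U_{k}$ with $|U_{k}|<\epsilon\,2^{-k}/k$, and shrink $\delta(x)$ for $x\in N_{k}$ so that the $\delta(x)$-neighborhood lies in $U_{k}$; I simultaneously shrink $\delta$ on $N$ so that the negligible-variation hypothesis yields $\sum_{x_{j}\in N}|\Delta_{j}F|<\epsilon$. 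For any partition $P$ subordinate to $\delta$, telescoping gives $F(b)-F(a)=\sum_{j}\Delta_{j}F$, so $R_{P}f-(F(b)-F(a))=\sum_{j}(f(x_{j})|I_{j}|-\Delta_{j}F)$. The $E$-tags contribute at most $\epsilon(b-a)$ by the straddle estimate; for the $N$-tags, the non-overlap of the $I_{j}$ inside each $U_{k}$ gives $\sum_{x_{j}\in N}|f(x_{j})||I_{j}|\le\sum_{k}k|U_{k}|<\epsilon$, while negligible variation bounds $\sum_{x_{j}\in N}|\Delta_{j}F|<\epsilon$. Hence $|R_{P}f-(F(b)-F(a))|<\epsilon(b-a)+2\epsilon$, and the same construction restricted to $[a,x]$ proves the identity for every $x$.

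For necessity, assume $F(x)-F(a)=(HK)\int_{a}^{x}f$ for all $x$, so that $F$ is the indefinite HK integral of $f$. I would invoke the standard fact that such an $F$ is differentiable almost everywhere with $F'=f$ a.e.\ (see \cite{key-1}) and set $E=\{x:F'(x)=f(x)\}$, so that $N=[a,b]\setminus E$ is null. It remains to show $F$ has negligible variation on $N$. Fixing $\epsilon$, the Saks--Henstock lemma supplies a gauge $\delta_{1}$ for which $\sum_{j}|f(x_{j})|I_{j}|-\Delta_{j}F|\le\epsilon$ over any $\delta_{1}$-fine subpartition. Applying the triangle inequality $|\Delta_{j}F|\le|\Delta_{j}F-f(x_{j})|I_{j}||+|f(x_{j})||I_{j}|$ to the subcollection of tags lying in $N$, and controlling $\sum_{x_{j}\in N}|f(x_{j})||I_{j}|$ with the same magnitude-stratified gauge $\delta_{2}$ from the first part, I obtain $\sum_{x_{j}\in N}|\Delta_{j}F|\le2\epsilon$ for $\delta=\min(\delta_{1},\delta_{2})$. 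As $\epsilon$ is arbitrary, $F$ has negligible variation on $N$.

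I expect the main obstacle to be the necessity direction, specifically proving that the indefinite integral has negligible variation on the null set $N$: this is where the Saks--Henstock lemma is indispensable for coupling $\Delta_{j}F$ to $f(x_{j})|I_{j}|$ across a subpartition, and where the magnitude-stratified covering must again be deployed to tame the possibly unbounded values of $f$ on $N$. The almost-everywhere differentiability of $F$ with $F'=f$ is the other nontrivial ingredient, which I would cite rather than reprove.
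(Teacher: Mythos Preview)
The paper does not actually prove this theorem; it is quoted as background and attributed to Theorem~5.12 of \cite{key-1}. Your outline is essentially the standard proof found there: on $E$ use the derivative estimate, on the null exceptional set stratify by the magnitude of $f$ and cover each stratum by an open set of small measure, then combine with the negligible-variation gauge (for sufficiency) or with the Saks--Henstock lemma and almost-everywhere differentiability of the indefinite integral (for necessity). So your approach is correct and coincides with the cited source, not with anything original to this paper.

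One small point worth tightening: negligible variation in this paper is phrased for tagged partitions of the full interval $[a,b]$, so when you say ``the same construction restricted to $[a,x]$'' you should note that a $\delta$-fine partition of $[a,x]$ can be completed to a $\delta$-fine partition of $[a,b]$ via Cousin's lemma, after which the negligible-variation bound applies to the tags in $N\cap[a,x]$.
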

In the context of the HK integral, negligible variation plays a role
analogous to that played by absolute continuity in the context of
the Lebesgue integral. Corollary~14.8 of \cite{key-1} proved the
following relationship between the two.
\begin{thm}
$F$ is absolutely continuous on $\left[a,b\right]$ if and only if
$F$ is of bounded variation on $\left[a,b\right]$ and $F$ has negligible
variation on every subset of $\left[a,b\right]$ that has measure
zero.
\end{thm}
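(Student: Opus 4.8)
The plan is to prove the two implications separately, the forward one being routine and the reverse one being where the HK machinery does the work.

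For the forward direction, suppose $F$ is absolutely continuous. That $F$ is of bounded variation is classical: taking the $\eta>0$ corresponding to $\epsilon=1$ in the definition of absolute continuity and partitioning $\left[a,b\right]$ into finitely many subintervals each shorter than $\eta$ bounds the total variation. To see that $F$ has negligible variation on an arbitrary measure-zero set $E$, fix $\epsilon>0$ and let $\eta>0$ be the constant supplied by absolute continuity. Since $E$ has measure zero, choose an open set $U\supseteq E$ with $\left|U\right|<\eta$, and define a gauge $\delta$ by requiring, for each $x\in E$, that $\left(x-\delta\left(x\right),x+\delta\left(x\right)\right)\subseteq U$, and letting $\delta$ be an arbitrary positive value elsewhere. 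Then for any tagged partition subordinate to $\delta$, the intervals $I_{j}$ whose tags lie in $E$ are non-overlapping and contained in $U$, so $\sum_{x_{j}\in E}\left|I_{j}\right|\le\left|U\right|<\eta$; absolute continuity then gives $\sum_{x_{j}\in E}\left|\Delta_{j}F\right|<\epsilon$, which is precisely negligible variation on $E$.

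For the reverse direction, suppose $F$ is of bounded variation and has negligible variation on every measure-zero subset. Being of bounded variation, $F$ is differentiable almost everywhere, and its derivative is Lebesgue integrable, with $\int_{a}^{b}\left|F'\right|$ dominated by the total variation of $F$. Let $E$ be the set on which $F'$ exists finitely; then $\left[a,b\right]\backslash E$ has measure zero, and by hypothesis $F$ has negligible variation there. Applying the Fundamental Theorem of Calculus for the HK integral (Theorem~\ref{thm:bartle}) with $f=F'$ then yields $F\left(x\right)-F\left(a\right)=(HK)\int_{a}^{x}F'\left(s\right)ds$ for every $x\in\left[a,b\right]$.

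The final step is to upgrade this HK identity to absolute continuity. Because $F'$ is Lebesgue integrable, its HK integral coincides with its Lebesgue integral, so $F\left(x\right)=F\left(a\right)+\int_{a}^{x}F'\left(s\right)ds$ with the integral understood in the Lebesgue sense, and an indefinite Lebesgue integral is absolutely continuous, completing the proof. The crux of the argument --- and the step I expect to be the main obstacle --- is recognizing that the measure-zero exceptional set of non-differentiability is exactly the set on which Theorem~\ref{thm:bartle} demands negligible variation, and then bridging from the HK integral back to the Lebesgue integral via the integrability of $F'$ that bounded variation guarantees.
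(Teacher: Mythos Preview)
The paper does not supply its own proof of this statement; it is quoted from Bartle (Corollary~14.8 of \cite{key-1}), so there is no in-paper argument to compare against.

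Your proof is correct in both directions. The forward implication is the standard open-cover argument, and the reverse implication is handled cleanly: bounded variation gives almost-everywhere differentiability together with $F'\in L^{1}\left[a,b\right]$; the hypothesis then supplies negligible variation on the null exceptional set, so Theorem~\ref{thm:bartle} applies to yield $F\left(x\right)-F\left(a\right)=(HK)\int_{a}^{x}F'$; and since $F'$ is Lebesgue integrable, this HK integral coincides with the Lebesgue integral, exhibiting $F$ as an indefinite Lebesgue integral and hence absolutely continuous. Each step is standard and the chain is sound.
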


\section{Change of Variables on a Single Interval}
\begin{defn}
A function $f$ has \emph{negligible conditional variation} on a set
$E\subseteq\left[a,b\right]$ if for any $\epsilon>0$ there exists
a gauge $\delta$ on $\left[a,b\right]$ such that for any tagged
partition $P\left[a,b\right]=\left\{ \left(x_{j},I_{j}\right):1\leq j\leq n\right\} $
subordinate to $\delta$, $\left|\sum_{x_{j}\in E}\Delta_{j}f\right|<\epsilon$.
\end{defn}
Some functions may have negligible conditional variation but not negligible
variation on the set of points where they fail to be differentiable;
a simple example is the indicator function of an open interval contained
in $\left[a,b\right]$. We will present a continuous function with
this property in Example~\ref{exa:strictnegvar} of Section~\ref{sec:Negligible-Variation}.

The same examples show that, although a function that has negligible
variation on a set also has negligible variation on all its subsets,
this is not true for negligible conditional variation.

We will need the following theorem, which was proven in \cite{key-5}
and \cite{key-2}. For a stronger version of the theorem, see Theorem
\ref{thm:negvarderiv}.
\begin{thm}
\label{thm:svzero}If $g$ has a derivative (finite or infinite) on
a set $E$ and $g\left(E\right)$ has measure zero, then $g'=0$ almost
everywhere on $E$. \end{thm}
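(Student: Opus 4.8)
The plan is to show that the set $A := \left\{ x\in E : g'\left(x\right)\neq0\right\}$, where the derivative is allowed to be $\pm\infty$, has measure zero. The key observation is that at every point of $A$ the difference quotients are bounded away from zero on a neighborhood, so I will decompose $A$ according to a uniform such bound and a scale. Precisely, for positive integers $n,k$ set
\[
A_{n,k} := \left\{ x\in E : \left|g\left(y\right)-g\left(x\right)\right|\geq\tfrac{1}{n}\left|y-x\right| \text{ whenever } y\in\left[a,b\right],\ \left|y-x\right|<\tfrac{1}{k}\right\} .
\]
Since $\left|g\left(y\right)-g\left(x\right)\right|/\left|y-x\right|\to\left|g'\left(x\right)\right|>0$ (or $\to\infty$) as $y\to x$ for each $x\in A$, every such $x$ lies in some $A_{n,k}$, whence $A=\bigcup_{n,k}A_{n,k}$. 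It therefore suffices to prove that each $A_{n,k}$ is null.

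Fix $n$ and $k$, and partition $\left[a,b\right]$ into finitely many subintervals $J_{1},\dots,J_{m}$ each of length less than $1/k$. For any two points $x,y$ in the same piece $A_{n,k}\cap J_{i}$ we have $\left|y-x\right|<1/k$, so the defining inequality gives $\left|x-y\right|\leq n\left|g\left(x\right)-g\left(y\right)\right|$. This says exactly that the map sending $g\left(x\right)$ back to $x$ is well defined and Lipschitz with constant $n$ on the image $g\left(A_{n,k}\cap J_{i}\right)$. Since $g\left(A_{n,k}\cap J_{i}\right)\subseteq g\left(E\right)$ has measure zero by hypothesis, and a Lipschitz map with constant $n$ scales outer measure by a factor of at most $n$, we obtain
\[
m^{*}\left(A_{n,k}\cap J_{i}\right)\leq n\, m^{*}\left(g\left(A_{n,k}\cap J_{i}\right)\right)\leq n\cdot0=0.
\]
Summing over the finitely many indices $i$ shows $A_{n,k}$ is null, and then $A=\bigcup_{n,k}A_{n,k}$ is null as a countable union of null sets. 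Hence $g'=0$ almost everywhere on $E$.

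The step that demands the most care is the decomposition $A=\bigcup_{n,k}A_{n,k}$: I must verify it both when $g'\left(x\right)$ is finite and nonzero and when it is infinite, since in the latter case the difference quotient tends to $+\infty$ rather than to a positive limit, yet still clears every threshold $1/n$ for $y$ close enough to $x$. The remaining ingredient, that a Lipschitz function with constant $n$ maps a null set to a null set with outer measure scaled by at most $n$, is standard and is the mechanism by which the hypothesis that $g\left(E\right)$ has measure zero forces $A$ to be null through the inverse estimate.
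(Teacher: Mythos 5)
Your proof is correct. Note that the paper does not prove Theorem~\ref{thm:svzero} at all --- it is stated as a known result with the proof deferred to Krzy\.{z}ewski and to Serrin and Varberg --- so there is no in-paper argument to compare against. What you have written is essentially the classical argument from those references: decompose $\left\{ x\in E:g'\left(x\right)\neq0\right\} $ into the sets $A_{n,k}$ on which the difference quotient is uniformly bounded below at scale $1/k$, observe that $g$ restricted to a piece of $A_{n,k}$ of diameter less than $1/k$ is injective with an $n$-Lipschitz inverse, and pull the null image back to conclude each piece is null. Both delicate points are handled: the infinite-derivative case still places $x$ in some $A_{n,k}$ because the difference quotient tends to $+\infty$, and the Lipschitz distortion bound for outer measure is the standard covering estimate. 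The argument is complete as written.
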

\begin{lem}
\label{zeroset}Assume that both $g:\left[a,b\right]\rightarrow D$
and $F:D\rightarrow\mathbb{R}$ have derivatives almost everywhere
and that $f=F'$ almost everywhere. Then $g'\left(x\right)=0$ at
almost every $x\in\left[a,b\right]$ where the equality\begin{equation}
\left(F\circ g\right)'\left(x\right)=\left(f\circ g\cdot g'\right)\left(x\right)\label{eq:chainrule}\end{equation}
 fails, that is to say where \eqref{eq:chainrule} is false or either
side is undefined.\end{lem}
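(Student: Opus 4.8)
The plan is to reduce everything to the Sard-type Theorem \ref{thm:svzero}: I will show that, apart from a null set, the chain rule \eqref{eq:chainrule} can only fail at points $x$ that $g$ carries into a fixed null subset of $D$, and then invoke Theorem \ref{thm:svzero} to force $g'=0$ there.

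First I would collect the exceptional behavior of $F$ and $f$ into a single null set in the range. Since $F$ is differentiable almost everywhere and $f=F'$ almost everywhere, the set
\[
Z=\{u\in D:\ F'(u)\text{ does not exist, or }F'(u)\neq f(u)\}
\]
has measure zero; by construction $Z$ contains every point at which $F$ fails to be differentiable as well as every point where $f$ disagrees with $F'$.

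Next I would establish the easy half of the chain rule. If $g$ is differentiable at $x$ and $g(x)\notin Z$, then $F$ is differentiable at $g(x)$ with $F'(g(x))=f(g(x))$, so the classical chain rule gives $(F\circ g)'(x)=F'(g(x))\,g'(x)=f(g(x))\,g'(x)$, that is, \eqref{eq:chainrule} holds at $x$. Reading this contrapositively, every $x$ at which $g'(x)$ exists but \eqref{eq:chainrule} fails satisfies $g(x)\in Z$. Thus, if $B$ denotes the set on which \eqref{eq:chainrule} fails, then $B$ is contained in the union of the set $\{x:\ g'(x)\text{ does not exist}\}$---which is null by hypothesis---and the set
\[
E=\{x\in[a,b]:\ g'(x)\text{ exists and }g(x)\in Z\}.
\]

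Finally I would apply Theorem \ref{thm:svzero} to $E$. Since $g$ has a finite derivative at every point of $E$ and $g(E)\subseteq Z$ has measure zero, the theorem yields $g'=0$ almost everywhere on $E$. Combined with the fact that $g'$ fails to exist only on a null set, this gives $g'(x)=0$ for almost every $x\in B$, which is the claim. The only delicate point is the bookkeeping of the various ``undefined'' cases: one must be sure that the non-differentiability sets of $g$ and of $F$, together with the set where $f\neq F'$, are all absorbed into either the hypothesized null sets or into $Z$. Once that is checked, the substantive work is carried out entirely by Theorem \ref{thm:svzero}.
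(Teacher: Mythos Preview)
Your proposal is correct and follows essentially the same approach as the paper: define the null set $Z$ where $F'$ fails to exist or to equal $f$, observe that \eqref{eq:chainrule} holds at every $x$ with $g'(x)$ defined and $g(x)\notin Z$, and then apply Theorem~\ref{thm:svzero} to the remaining points in $g^{-1}(Z)$ where $g'$ exists. The paper's proof is terser, applying Theorem~\ref{thm:svzero} directly to $g^{-1}(Z)$ without separating out your set $E$, but the content is the same.
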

\begin{proof}
Let $Z$ be the null set where $F$ does not have a derivative equal
to $f$. By Theorem \ref{thm:svzero}, $g'\left(x\right)=0$ for almost
every $x\in g^{-1}\left(Z\right)$. On the complement of $g^{-1}\left(Z\right)$,
\eqref{eq:chainrule} holds at all $x$ where $g'\left(x\right)$
exists, and so almost everywhere.\end{proof}
\begin{thm}
\label{thm:subs1}Assume that $g:\left[a,b\right]\rightarrow\mathbb{R}$
has a derivative almost everywhere and that $f:\mathbb{R}\rightarrow\mathbb{R}$
is HK integrable on every interval with endpoints in the range of
$g$.%
\footnote{If the HK integral of $f$ exists on an interval then it also exists
on every subinterval \cite[Corollary 3.8]{key-1}. It is therefore
sufficient to require that $f$ be HK integrable over an interval
containing the range of $g$.%
} Define $F\left(x\right):=(HK)\int_{g\left(a\right)}^{x}f\left(u\right)du$.
Then $\left(f\circ g\right)\cdot g'$ is HK integrable on $\left[a,b\right]$
and the change of variables formula \[
(HK)\int_{g\left(a\right)}^{g\left(b\right)}f\left(u\right)du=(HK)\int_{a}^{b}f\left(g\left(s\right)\right)g'\left(s\right)ds\]
holds if and only if $F\circ g$ has negligible conditional variation
on the set where $\left(F\circ g\right)'=f\circ g\cdot g'$ fails.\end{thm}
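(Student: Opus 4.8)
The plan is to recast the change of variables formula as a single Fundamental-Theorem-of-Calculus statement for the composite primitive and then run one gauge estimate that serves both directions. Write $H:=F\circ g$ and $\phi:=\left(f\circ g\right)\cdot g'$, with the paper's convention that $\phi=0$ where it is undefined. Because $F$ is an indefinite HK integral, Theorem~\ref{thm:bartle} applied to $F$ itself shows that $F$ is differentiable with $F'=f$ almost everywhere; together with the hypothesis that $g$ is differentiable a.e., Lemma~\ref{zeroset} (which rests on Theorem~\ref{thm:svzero}) gives $g'=0$ almost everywhere on the set $B$ where the chain rule $\left(F\circ g\right)'=\left(f\circ g\right)\cdot g'$ fails. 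Hence $\phi=0$ a.e.\ on $B$, so $N:=\left\{x\in B:\phi\left(x\right)\neq0\right\}$ is a null set. Finally, since $H\left(a\right)=F\left(g\left(a\right)\right)=0$ and $H\left(b\right)=F\left(g\left(b\right)\right)=(HK)\int_{g\left(a\right)}^{g\left(b\right)}f$, and since the intervals of any tagged partition tile $\left[a,b\right]$ so that $\sum_j\Delta_jH=H\left(b\right)-H\left(a\right)$ telescopes (no continuity of $g$ is needed), the change of variables formula is precisely the assertion that $\phi$ is HK integrable on $\left[a,b\right]$ with $(HK)\int_a^b\phi=H\left(b\right)-H\left(a\right)$.

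The computational heart is the identity, valid for every tagged partition $P=\left\{\left(x_j,I_j\right)\right\}$,
\[
R_P\phi-\bigl(H\left(b\right)-H\left(a\right)\bigr)=\sum_{x_j\in A}\bigl(\phi\left(x_j\right)\left|I_j\right|-\Delta_jH\bigr)+\sum_{x_j\in N}\phi\left(x_j\right)\left|I_j\right|-\sum_{x_j\in B}\Delta_jH,
\]
where $A=\left[a,b\right]\setminus B$ and we have used that $\phi=0$ on $B\setminus N$. I would control the first two sums by gauges available under the standing hypotheses alone. On $A$ the straddle lemma applies at each tag, since there $H'\left(x_j\right)=\phi\left(x_j\right)$ is finite, yielding a gauge for which $\left|\phi\left(x_j\right)\left|I_j\right|-\Delta_jH\right|\le\epsilon\left|I_j\right|$ and hence a total of at most $\epsilon\left(b-a\right)$. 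The middle sum ranges over tags in the null set $N$; decomposing $N$ into the level sets $\left\{k-1\le\left|\phi\right|<k\right\}$ and covering each by an open set of measure $<\epsilon2^{-k}/k$ produces a gauge forcing $\sum_{x_j\in N}\left|\phi\left(x_j\right)\right|\left|I_j\right|<\epsilon$. Thus, up to an error of controllable size $O\left(\epsilon\right)$, the quantity $R_P\phi-\left(H\left(b\right)-H\left(a\right)\right)$ agrees with $-\sum_{x_j\in B}\Delta_jH$, and both directions of the theorem fall out of this.

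For sufficiency, assume $H$ has negligible conditional variation on $B$. Taking a gauge also subordinate to the one witnessing this condition makes $\left|\sum_{x_j\in B}\Delta_jH\right|<\epsilon$, so the identity gives $\left|R_P\phi-\left(H\left(b\right)-H\left(a\right)\right)\right|\le\epsilon\left(b-a\right)+2\epsilon$ for every subordinate $P$; as $\epsilon$ is arbitrary, $\phi$ is HK integrable with integral $H\left(b\right)-H\left(a\right)$, which is the change of variables formula. For necessity, assume $\phi$ is HK integrable with $(HK)\int_a^b\phi=H\left(b\right)-H\left(a\right)$; then a gauge makes $\left|R_P\phi-\left(H\left(b\right)-H\left(a\right)\right)\right|<\epsilon$, and solving the same identity for $\sum_{x_j\in B}\Delta_jH$ bounds it by $\epsilon+\epsilon\left(b-a\right)+\epsilon$ on the common refinement of the three gauges, establishing negligible conditional variation on $B$.

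I expect two points to demand the most care. The first, genuinely quantitative, is the treatment of tags landing in the null set $N$: since $\phi$ may be unbounded there, the naive bound fails and the level-set/open-cover gauge construction is essential. The second, more conceptual, is recognizing that negligible \emph{conditional} variation, rather than full negligible variation, is exactly what the single-interval formula requires: only the signed telescoped sum $\sum_{x_j\in B}\Delta_jH$ ever enters the estimate, and it appears solely inside one pair of absolute-value bars, so cancellation across $B$ is permitted. Were one instead to demand the formula on every subinterval, the error would have to be bounded for arbitrary sub-collections of tags, forcing the unsigned $\sum_{x_j\in B}\left|\Delta_jH\right|$ and hence the stronger notion, in accordance with Theorem~\ref{thm:bartle}.
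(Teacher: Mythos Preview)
Your proof is correct and follows essentially the same route as the paper: the telescoping identity for $H(b)-H(a)$, the straddle estimate on $A=[a,b]\setminus B$ where $H'=\phi$, and the negligible conditional variation control of $\sum_{x_j\in B}\Delta_jH$ are exactly the paper's ingredients in both directions. The only difference is cosmetic: where you retain $\phi$ and separately bound the residual sum $\sum_{x_j\in N}\phi(x_j)\lvert I_j\rvert$ over the null set $N\subset B$ via a level-set gauge, the paper instead replaces $\phi$ at the outset by the a.e.-equal function $f\circ g\cdot h$ (with $h=0$ on $B$ and $h=g'$ off $B$), which makes that term identically zero and so eliminates your middle sum.
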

\begin{proof}
Let $B$ be the set where $\left(F\circ g\right)'=f\circ g\cdot g'$
fails and assume $F\circ g$ has negligible conditional variation
there. Let $h\left(x\right)=0$ if $x\in B$ and $h\left(x\right)=g'\left(x\right)$
otherwise.

By Theorem~\ref{thm:bartle}, $F$ has a derivative equal to $f$
almost everywhere and so by Lemma~\ref{zeroset}, $g'=h=0$ almost
everywhere on $B$. Consequently, \[
(HK)\int_{a}^{b}\left(f\circ g\cdot g'\right)\left(s\right)ds=(HK)\int_{a}^{b}\left(f\circ g\cdot h\right)\left(s\right)ds.\]
 Since $F\circ g$ has a derivative on the complement of $B$, there
exists for any $\epsilon>0$ a function $\eta_{\epsilon}:\left[a,b\right]\backslash B\rightarrow\mathbb{R}^{+}$
such that if $y\in\left[x-\eta_{\epsilon}\left(x\right),x+\eta_{\epsilon}\left(x\right)\right]\cap\left[a,b\right]$
then \[
\left|\left(F\circ g\right)'\left(x\right)\cdot\left(y-x\right)-\left(\left(F\circ g\right)\left(y\right)-\left(F\circ g\right)\left(x\right)\right)\right|<\epsilon\left|y-x\right|/\left(b-a\right)\]
Also, because $F\circ g$ has negligible conditional variation on
$B$, there exists a gauge $\delta_{1}$ on $\left[a,b\right]$ such
that for any tagged partition $P\left[a,b\right]=\left\{ \left(x_{j},I_{j}\right):1\leq j\leq n\right\} $
subordinate to $\delta_{1}$, $\left|\sum_{x_{j}\in B}\Delta_{j}f\right|<\epsilon/2$.
Let $\delta$ be a gauge on $\left[a,b\right]$ so that $\delta\left(x\right)=\eta_{\epsilon/2}\left(x\right)$
if $x\notin B$ and $\delta\left(x\right)=\delta_{1}\left(x\right)$
if $x\in B$.

Consider a tagged partition $P\left[a,b\right]=\left\{ \left(x_{j},I_{j}\right):1\leq j\leq n\right\} $
subordinate to $\delta$. The Riemann sum of $f\circ g\cdot h$ corresponding
to this tagged partition is \begin{eqnarray*}
R_{P}f\circ g\cdot h & = & \overbrace{\sum_{\substack{x_{j}\in B}
}\left(f\circ g\cdot h\right)\left(x_{j}\right)\cdot\left|I_{j}\right|}^{=0}+\sum_{\substack{x_{j}\notin B}
}\left(f\circ g\cdot h\right)\left(x_{j}\right)\cdot\left|I_{j}\right|\\
 & = & \sum_{\substack{x_{j}\notin B}
}\left(F\circ g\right)'\left(x_{j}\right)\cdot\left|I_{j}\right|\\
 & = & \sum_{\substack{x_{j}\notin B}
}\left[\left(F\circ g\right)'\left(x_{j}\right)\cdot\left|I_{j}\right|-\Delta_{j}\left(F\circ g\right)\right]\\
 &  & +\sum_{\substack{x_{j}\notin B}
}\Delta_{j}\left(F\circ g\right)\\
 & = & \sum_{\substack{x_{j}\notin B}
}\left[\left(F\circ g\right)'\left(x_{j}\right)\cdot\left|I_{j}\right|-\Delta_{j}\left(F\circ g\right)\right]\\
 &  & +\sum_{\substack{x_{j}\in\left[a,b\right]}
}\Delta_{j}\left(F\circ g\right)-\sum_{\substack{x_{j}\in B}
}\Delta_{j}\left(F\circ g\right)\\
 & = & \sum_{\substack{x_{j}\notin B}
}\left[\left(F\circ g\right)'\left(x_{j}\right)\cdot\left|I_{j}\right|-\Delta_{j}\left(F\circ g\right)\right]\\
 &  & +(HK)\int_{g\left(a\right)}^{g\left(b\right)}f\left(u\right)du-\sum_{\substack{x_{j}\in B}
}\Delta_{j}\left(F\circ g\right).\end{eqnarray*}
 And so \begin{eqnarray}
 &  & R_{P}f\circ g\cdot h-(HK)\int_{g\left(a\right)}^{g\left(b\right)}f\left(u\right)du\label{eq:main}\\
 & = & \sum_{\substack{x_{j}\notin B}
}\left[\left(F\circ g\right)'\left(x_{j}\right)\cdot\left|I_{j}\right|-\Delta_{j}\left(F\circ g\right)\right]-\sum_{\substack{x_{j}\in B}
}\Delta_{j}\left(F\circ g\right).\nonumber \end{eqnarray}
 Since $F\circ g$ has negligible conditional variation on $B$ and
$\delta$ is chosen accordingly, \[
\left|\sum_{\substack{x_{j}\in B}
}\Delta_{j}\left(F\circ g\right)\right|<\epsilon/2.\]
 Also, for any $x_{j}\notin B$, \[
\left|\left(F\circ g\right)'\left(x_{j}\right)\cdot\left|I_{j}\right|-\Delta_{j}\left(F\circ g\right)\right|<\epsilon\left|I_{j}\right|/2\left(b-a\right),\]
 and so \[
\left|\sum_{\substack{x_{j}\notin B}
}\left(F\circ g\right)'\left(x_{j}\right)\cdot\left|I_{j}\right|-\Delta_{j}\left(F\circ g\right)\right|<\epsilon/2.\]
 Therefore\[
\left|R_{P}f\circ g\cdot h-(HK)\int_{g\left(a\right)}^{g\left(b\right)}f\left(u\right)du\right|<\epsilon,\]
 proving that $(HK)\int_{a}^{b}f\left(g\left(s\right)\right)g'\left(s\right)ds$
exists and is equal to $(HK)\int_{g\left(a\right)}^{g\left(b\right)}f\left(u\right)du$.

Conversely, choose $\epsilon>0$, let $B$, $h$, and $\eta_{\epsilon}$
be defined as above, and assume $(HK)\int_{g\left(a\right)}^{g\left(b\right)}f\left(u\right)du=(HK)\int_{a}^{b}f\left(g\left(s\right)\right)h\left(s\right)ds$.
Thus there exists a gauge $\delta_{1}$ on $\left[a,b\right]$ so
that for any tagged partition $P$ subordinate to $\delta_{1}$, \begin{equation}
\left|R_{P}f\circ g\cdot h-(HK)\int_{g\left(a\right)}^{g\left(b\right)}f\left(u\right)du\right|<\epsilon/2.\label{eq:hkint1}\end{equation}
 Let $\delta$ be a gauge on $\left[a,b\right]$ so that $\delta\left(x\right)=\min\left\{ \delta_{1}\left(x\right),\eta_{\epsilon/2}\left(x\right)\right\} $
if $x\notin B$, and $\delta\left(x\right)=\delta_{1}\left(x\right)$
if $x\in B$.

Choose any tagged partition $P\left[a,b\right]=\left\{ \left(x_{j},\left[a_{j},b_{j}\right]\right):1\leq j\leq n\right\} $
subordinate to $\delta$. Consequently it is also subordinate to $\delta_{1}$,
and so \eqref{eq:hkint1} holds. By \eqref{eq:main}, \begin{eqnarray*}
\left|\sum_{\substack{x_{j}\notin B}
}\left(F\circ g\right)'\left(x_{j}\right)\cdot\left|I_{j}\right|-\Delta_{j}\left(F\circ g\right)-\sum_{\substack{x_{j}\in B}
}\Delta_{j}\left(F\circ g\right)\right| & < & \epsilon/2.\end{eqnarray*}
 Also, \[
\left|\sum_{\substack{x_{j}\notin B}
}\left(F\circ g\right)'\left(x_{j}\right)\cdot\left|I_{j}\right|-\Delta_{j}\left(F\circ g\right)\right|<\epsilon/2.\]
 Therefore \[
\left|\sum_{\substack{x_{j}\in B}
}\Delta_{j}\left(F\circ g\right)\right|<\epsilon,\]
 proving that $F\circ g$ has negligible conditional variation on
$B$.
\end{proof}
By taking $F\left(x\right)=x$, we obtain as a corollary the following
necessary and sufficient condition for the Fundamental Theorem of
Calculus for the HK integral to hold for a particular interval, rather
than all subintervals.
\begin{cor}
\label{cor:fund}Assume that $g:\left[a,b\right]\rightarrow\mathbb{R}$
is differentiable almost everywhere on $\left[a,b\right]$. Then $g'$
is HK integrable on $\left[a,b\right]$ and $g\left(b\right)-g\left(a\right)=(HK)\int_{a}^{b}g'\left(s\right)ds$
if and only if $g$ has negligible conditional variation on the set
where it is not differentiable.
\end{cor}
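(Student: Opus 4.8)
The plan is to specialize Theorem~\ref{thm:subs1} to the constant integrand $f\equiv 1$, exactly as the remark preceding the corollary suggests by "taking $F(x)=x$." With this choice $f$ is HK integrable (indeed Riemann integrable) on every interval with endpoints in the range of $g$, so the hypotheses of Theorem~\ref{thm:subs1} reduce to the stated assumption that $g$ is differentiable almost everywhere. Moreover $F(x):=(HK)\int_{g(a)}^{x}1\,du=x-g(a)$, so $F$ is a translate of the identity; the additive constant $g(a)$ is forced by the normalization $F(g(a))=0$ and will turn out to be harmless.

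First I would record what the conclusion of Theorem~\ref{thm:subs1} becomes under this specialization. Since $f\circ g\equiv 1$, the integrand $(f\circ g)\cdot g'$ is simply $g'$, and the two sides of the change of variables formula become $(HK)\int_{g(a)}^{g(b)}1\,du=g(b)-g(a)$ and $(HK)\int_{a}^{b}g'(s)\,ds$. Thus the assertion that $(f\circ g)\cdot g'$ is HK integrable and that the change of variables formula holds is precisely the assertion that $g'$ is HK integrable and $g(b)-g(a)=(HK)\int_{a}^{b}g'(s)\,ds$, which is the conclusion sought in the corollary.

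Next I would identify the exceptional set and translate the variation condition. Because $F$ is a translate of the identity, $F\circ g=g-g(a)$ is differentiable at exactly the points where $g$ is, and there $(F\circ g)'(s)=g'(s)=(f\circ g\cdot g')(s)$. Hence the set where the equality $(F\circ g)'=f\circ g\cdot g'$ fails---whether because the equation is false or because either side is undefined---coincides with the set of points at which $g$ is not differentiable. Finally, since $\Delta_{j}(F\circ g)=(g(b_{j})-g(a))-(g(a_{j})-g(a))=\Delta_{j}g$, the additive constant cancels in every difference entering the definition of negligible conditional variation, so $F\circ g$ has negligible conditional variation on a set if and only if $g$ does. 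Because Theorem~\ref{thm:subs1} is already an equivalence, both directions of the corollary transfer at once from these identifications.

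I expect no genuine obstacle here: the corollary is a direct reading of Theorem~\ref{thm:subs1}. The only points that require care are the two bookkeeping verifications just described---that the failure set for the chain-rule identity is exactly the non-differentiability set of $g$, and that the additive constant $g(a)$ does not affect negligible conditional variation---and both are immediate.
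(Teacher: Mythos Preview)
Your proposal is correct and follows exactly the approach the paper indicates: specialize Theorem~\ref{thm:subs1} with $f\equiv 1$, so that $F$ is (a translate of) the identity. You are in fact more careful than the paper, explicitly checking that the additive constant $g(a)$ does not affect negligible conditional variation and that the failure set of the chain-rule identity is precisely the non-differentiability set of $g$.
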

Theorem \ref{thm:subs1} complements Theorem 1 by obtaining a necessary
and sufficient condition for change of variables to hold on a single
interval. However, even when one side of \eqref{eq:cov1} is a Lebesgue
integral, the integral on the other side sometimes must be taken in
the HK sense. For example, take $g$ as any function that is not an
indefinite Lebesgue integral but which satisfies the condition of
Corollary \ref{cor:fund} and let $f\left(x\right)=1$.

\section{\label{sec:allsubint}Change of Variables on All Subintervals}

The following recasting of the Saks-Henstock Lemma for negligible
variation provides a corollary to Theorem \ref{thm:subs1} where change
of variables holds for each subinterval and, in this sense, provides
the precise HK analog of the Serrin and Varberg theorem.
\begin{lem}
\label{lem:saks}Let $f$ be a real-valued function on $\left[a,b\right]$
and $E$ a subset of $\left[a,b\right]$. Assume that $f$ has negligible
conditional variation on $E\cap\left[\alpha,\beta\right]$ for every
$\left[\alpha,\beta\right]\subseteq\left[a,b\right]$. Then $f$ has
negligible variation on $E$.\end{lem}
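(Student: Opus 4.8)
The plan is to run a Saks--Henstock type argument that upgrades the one-sided (conditional) control available on every subinterval to the two-sided (absolute) control required for negligible variation. Fix $\epsilon>0$. Taking $[\alpha,\beta]=[a,b]$ in the hypothesis, I would first choose a gauge $\delta$ on $[a,b]$ witnessing that $f$ has negligible conditional variation on $E$ itself, so that every tagged partition of $[a,b]$ subordinate to $\delta$ satisfies $\left|\sum_{x_j\in E}\Delta_j f\right|<\epsilon/4$. The claim is that this same $\delta$ already witnesses negligible variation on $E$.

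To verify this, fix any tagged partition $P\left[a,b\right]=\{(x_j,I_j)\}$ subordinate to $\delta$ and split the indices with $x_j\in E$ into $J^{+}=\{j:x_j\in E,\ \Delta_j f\ge 0\}$ and $J^{-}=\{j:x_j\in E,\ \Delta_j f<0\}$, so that $\sum_{x_j\in E}|\Delta_j f|=\sum_{j\in J^{+}}\Delta_j f-\sum_{j\in J^{-}}\Delta_j f$. It then suffices to show $\sum_{j\in J^{+}}\Delta_j f<\epsilon/2$ and $-\sum_{j\in J^{-}}\Delta_j f<\epsilon/2$; I describe the case $J^{+}$, the other being symmetric. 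The intervals $\{I_j:j\in J^{+}\}$ have pairwise disjoint interiors, and the closure of $[a,b]\setminus\bigcup_{j\in J^{+}}I_j$ is a finite union of nondegenerate closed gaps $K_1,\dots,K_m$, each a subinterval of $[a,b]$.

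The key step is to build from $P$ an auxiliary full partition $P'$ of $[a,b]$ by retaining the intervals $I_j$ with $j\in J^{+}$ and partitioning each gap $K_i$ by a filler $Q_i$ whose $E$-tagged increments nearly cancel. Concretely, for each $i$ I apply the hypothesis to the subinterval $K_i$ to obtain a gauge $\gamma_i$; by Cousin's lemma I select a tagged partition $Q_i$ of $K_i$ subordinate to both $\delta$ and $\gamma_i$, and I check (by extending $Q_i$ to a full partition of $[a,b]$ subordinate to $\gamma_i$, chosen so that its only tags lying in $K_i$ are those of $Q_i$) that $\left|\sum_{x_l\in E\cap K_i}\Delta_l f\right|<\epsilon/(4m)$ along $Q_i$. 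The resulting $P'$ is a tagged partition of $[a,b]$ subordinate to $\delta$, so $\left|\sum_{x_l\in E}\Delta_l f\right|<\epsilon/4$ along $P'$; since the $E$-tags of $P'$ are precisely the $J^{+}$ tags together with the $E$-tags of the fillers, subtracting the filler contributions gives $0\le\sum_{j\in J^{+}}\Delta_j f<\epsilon/4+m\cdot\epsilon/(4m)=\epsilon/2$, as needed.

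The point demanding care, and the step I expect to be the main obstacle, is the apparent circularity in the gauges: the gaps $K_i$, and hence the subinterval gauges $\gamma_i$, depend on the partition $P$, which must itself be subordinate to a gauge fixed in advance. This is resolved by the order of quantifiers above: the single gauge $\delta$ certifying negligible variation is chosen first, using only the full-interval instance of the hypothesis, whereas all gap-dependent data ($K_i$, $\gamma_i$, $Q_i$, and the integer $m$) are produced afterward, for each given $P$, as an auxiliary construction used solely to estimate the already-fixed sum. The only remaining bookkeeping is the treatment of the finitely many shared endpoints of the $I_j$ and the gaps, handled in the standard way by choosing the filler tags and the extension tags to avoid those boundary points so that no increment is counted twice.
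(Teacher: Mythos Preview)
Your proposal is correct and follows essentially the same Saks--Henstock argument as the paper: both fix the full-interval gauge $\delta$ first, keep the intervals whose $E$-tagged increments have a chosen sign, refine the remaining portion using the subinterval hypothesis so that its $E$-contribution is negligible, and then apply $\delta$ to the hybrid partition to isolate $\sum_{j\in J^{+}}\Delta_j f$. The only cosmetic differences are that the paper retains every interval with $\Delta_j f\ge 0$ (not just those with $x_j\in E$), refines the discarded $I_j$ individually rather than your merged gaps $K_i$, and uses an auxiliary $\epsilon'\to 0$ in place of your partition-dependent $\epsilon/(4m)$.
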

\begin{proof}
Choose $\epsilon>0$ and let $\delta$ be a gauge on $\left[a,b\right]$
such that for any tagged partition $P\left[a,b\right]=\left\{ \left(x_{j},I_{j}\right):1\leq j\leq n\right\} $
subordinate to $\delta$, \[
\left|\sum_{x_{j}\in E}\Delta_{j}f\right|<\epsilon\]

Fix such a tagged partition $P$ and choose $\epsilon'>0$. Since
$f$ has negligible conditional variation on each $I_{j}$, there
exists a gauge, $\delta_{j}$, such that if $\left\{ \left(x_{j,k},I_{x_{j,k}}\right):1\leq k\leq n_{j}\right\} $
is a tagged partition of $\left[a_{j},b_{j}\right]$ subordinate to
$\delta_{j}$, then\[
\left|\sum_{x_{j,k}\in E}\Delta_{j,k}f\right|<\epsilon'/n.\]
Let $Q_{j}=\left\{ \left(x_{j,k},I_{x_{j,k}}\right):1\leq k\leq n_{j}\right\} $
be a tagged partition of $I_{j}$ subordinate to $\min\left\{ \delta,\delta_{j}\right\} $.

Also let $L$ be the subset of $\left\{ 1\ldots n\right\} $ such
that if $j\in L$ then $\Delta_{j}f\ge0$. Now let\[
R=\left\{ \left(x_{j},I_{j}\right):j\in L\right\} \cup\left(\bigcup_{j\notin L}Q_{j}\right).\]
Since $R$ is a tagged partition of $\left[a,b\right]$ subordinate
to $\delta$, \[
\epsilon>\left|\sum_{\substack{\left(x,\left[\alpha,\beta\right]\right)\in R\\
x\in E}
}f\left(\beta\right)-f\left(\alpha\right)\right|=\left|\sum_{\substack{j\in L\\
x_{j}\in E}
}\Delta_{j}f+\sum_{j\notin L}\sum_{x_{j,k}\in E}\Delta_{j,k}f\right|.\]
 Also, because each $Q_{j}$ is subordinate to $\delta_{j}$, \[
\epsilon'>\sum_{j\notin L}\left|\sum_{x_{j,k}\in E}\Delta_{j,k}f\right|\ge\left|\sum_{j\notin L}\sum_{x_{j,k}\in E}\Delta_{j,k}f\right|.\]
 Consequently, \[
\epsilon+\epsilon'>\left|\sum_{\substack{j\in L\\
x_{j}\in E}
}\Delta_{j}f\right|=\sum_{\substack{j\in L\\
x_{j}\in E}
}\left|\Delta_{j}f\right|.\]
 Since the choice of $\epsilon'>0$ was arbitrary, it must be true
that \[
\epsilon\ge\sum_{\substack{j\in L\\
x_{j}\in E}
}\left|\Delta_{j}f\right|.\]
 Similarly, \[
\epsilon\ge\sum_{\substack{j\notin L\\
x_{j}\in E}
}\left|\Delta_{j}f\right|.\]
 Therefore \[
2\epsilon\ge\sum_{x_{j}\in E}\left|\Delta_{j}f\right|,\]
 proving that $f$ has negligible variation on $E$.\end{proof}
\begin{cor}
\label{cor:lastcor}Assume that $g:\left[a,b\right]\rightarrow\mathbb{R}$
is differentiable almost everywhere and that $f:\mathbb{R}\rightarrow\mathbb{R}$
is HK integrable on every interval with endpoints in the range of
$g$. Define $F\left(x\right):=(HK)\int_{g\left(a\right)}^{x}f\left(u\right)du$.
Then $\left(f\circ g\right)\cdot g'$ is HK integrable on $\left[a,b\right]$
and the change of variables formula \[
(HK)\int_{g\left(\alpha\right)}^{g\left(\beta\right)}f\left(u\right)du=(HK)\int_{\alpha}^{\beta}f\left(g\left(s\right)\right)g'\left(s\right)ds\]
holds for every $\left[\alpha,\beta\right]\subseteq\left[a,b\right]$
if and only if $F\circ g$ has negligible variation on the set where
$\left(F\circ g\right)'=f\circ g\cdot g'$ fails.
\end{cor}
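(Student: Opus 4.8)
The plan is to combine Theorem~\ref{thm:subs1}, applied interval by interval, with Lemma~\ref{lem:saks}. Throughout I would write $B$ for the set on which $\left(F\circ g\right)'=f\circ g\cdot g'$ fails. The only genuinely new ingredient beyond these two results is a bookkeeping observation about how negligible (conditional) variation behaves when the ambient interval changes from $\left[a,b\right]$ to a subinterval $\left[\alpha,\beta\right]$, together with the remark that replacing the base point $g\left(a\right)$ by $g\left(\alpha\right)$ in the definition of $F$ alters $F\circ g$ only by an additive constant, and hence changes neither the failure set $B$ nor any increment $\Delta_{j}\left(F\circ g\right)$. Accordingly, for a fixed $\left[\alpha,\beta\right]$ I would set $F_{\alpha}\left(x\right):=(HK)\int_{g\left(\alpha\right)}^{x}f\left(u\right)du$, which plays for $\left[\alpha,\beta\right]$ exactly the role that $F$ plays for $\left[a,b\right]$ in Theorem~\ref{thm:subs1} and which satisfies $F_{\alpha}\circ g=F\circ g-\textrm{const}$.

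For the converse direction (change of variables on every subinterval implies negligible variation on $B$), I would fix $\left[\alpha,\beta\right]\subseteq\left[a,b\right]$ and apply the necessity half of Theorem~\ref{thm:subs1} on $\left[\alpha,\beta\right]$ to $F_{\alpha}$. Since $F_{\alpha}\circ g$ and $F\circ g$ have the same failure set and the same increments, this shows that $F\circ g$ has negligible conditional variation on $B\cap\left[\alpha,\beta\right]$ relative to the ambient interval $\left[\alpha,\beta\right]$. As $\left[\alpha,\beta\right]$ was arbitrary, Lemma~\ref{lem:saks} with $E=B$ then upgrades these interval-by-interval conditional statements to negligible variation of $F\circ g$ on $B$ relative to $\left[a,b\right]$. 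This direction is essentially immediate once the constant-shift remark is in place.

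For the forward direction (negligible variation on $B$ implies integrability and change of variables everywhere), I would first argue the restriction step. Given $\epsilon>0$, take a gauge $\delta$ on $\left[a,b\right]$ witnessing negligible variation of $F\circ g$ on $B$. Any tagged partition of a subinterval $\left[\alpha,\beta\right]$ subordinate to $\delta|_{\left[\alpha,\beta\right]}$ can be completed, by Cousin's lemma applied to $\left[a,\alpha\right]$ and $\left[\beta,b\right]$, to a tagged partition of $\left[a,b\right]$ subordinate to $\delta$; since every tag of such a partition of $\left[\alpha,\beta\right]$ lies in $\left[\alpha,\beta\right]$, the tagged-point sum over $B$ for the completion dominates the sum for the original piece, so $F\circ g$ has negligible variation, and a fortiori negligible conditional variation, on $B\cap\left[\alpha,\beta\right]$ relative to $\left[\alpha,\beta\right]$. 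The sufficiency half of Theorem~\ref{thm:subs1}, applied on each $\left[\alpha,\beta\right]$ to $F_{\alpha}$, then yields both the HK integrability of $\left(f\circ g\right)\cdot g'$ on $\left[a,b\right]$ (by taking $\left[\alpha,\beta\right]=\left[a,b\right]$) and the change of variables formula on every subinterval.

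I expect the only real obstacle to be the change-of-ambient-interval step in the forward direction: one must verify, rather than assume, that negligible variation with respect to $\left[a,b\right]$ restricts to negligible variation with respect to $\left[\alpha,\beta\right]$, and this is exactly where the existence of subordinate tagged partitions (Cousin's lemma) enters. It is also the structural reason the corollary is stated with negligible \emph{variation} on $B$ rather than the conditional version: as already noted in Section~\ref{sec:allsubint}, conditional variation need not pass to subsets, whereas negligible variation does, and this good behavior under restriction is precisely what allows both implications to close.
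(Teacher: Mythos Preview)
Your proposal is correct and follows exactly the route the paper intends: the corollary is stated immediately after Lemma~\ref{lem:saks} and the paper only remarks that the condition is ``clearly justified by the preceding lemma,'' leaving to the reader precisely the base-point shift $F_{\alpha}=F-\textrm{const}$, the ambient-interval restriction via Cousin's lemma, and the interval-by-interval invocation of Theorem~\ref{thm:subs1} that you spell out. You have supplied more detail than the paper does, but the architecture is identical.
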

The necessary and sufficient condition that $F\circ g$ have negligible
variation on the set where $\left(F\circ g\right)'=f\circ g\cdot g'$
fails is clearly justified by the preceding lemma; in Theorem \ref{thm:equivcond}
of the next section, we prove that it is equivalent to the condition
that $F\circ g$ have negligible variation on each null set and on
the set where $g'=0$.
\begin{rem}
A tempting possibility to investigate is whether HK integrals automatically
satisfy the requirements of the substituting function in the change
of variables formula. In other words, if $g$ is an indefinite HK
integral, is it true that for any HK integrable $f$ that $(HK)\int_{g\left(\alpha\right)}^{g\left(\beta\right)}f\left(u\right)du=(HK)\int_{\alpha}^{\beta}f\left(g\left(s\right)\right)g'\left(s\right)ds$?
If this were true, then the composition of two indefinite HK integrals
would be an indefinite HK integral. However, this is not the case,
as the following two indefinite Lebesgue integrals (and hence also
HK integrals) fail to have a composition which is an HK integral.

Let $S$ be the Smith\textendash{}Volterra\textendash{}Cantor set
of measure $\frac{1}{2}$ \cite[pp. 90-91]{key-8} constructed on
a unit interval through the usual method of deleting an open interval
of length $4^{-n}$ from the center of each of the $2^{n-1}$ intervals
at step $n$, leaving a closed nowhere-dense set of positive measure
in the limit. Let $G\left(x\right)=\mbox{dist}\left(x,S\right)$ and
$F\left(x\right)=\sqrt[4]{x}$. Consequently $F$ and $G$ are both
indefinite Riemann integrals, yet for any $x\in S$ and $n\in\mathbb{N}$
there exists $y\in\left(x-2^{-n},x+2^{-n}\right)$ such that $\left(y-2^{-2n-3},y+2^{-2n-3}\right)\subseteq S^{c}$.
Thus $\left|\frac{F\left(G\left(y\right)\right)-F\left(G\left(x\right)\right)}{y-x}\right|=\frac{F\left(G\left(y\right)\right)}{\left|y-x\right|}>\frac{\sqrt[4]{2^{-2n-3}}}{2^{-n}}=\sqrt[4]{2^{2n-3}}$.
Therefore $F\circ G$ has no derivative on $S$, a set of positive
measure, and so cannot be the indefinite HK integral of any function.

Note that $G$ is differentiable except on the endpoints and midpoint
of each deleted interval, which is a countable set, and that the construction
of $G$ could be altered so that those points are differentiable as
well, with the same result.
\end{rem}

\section{\label{sec:Negligible-Variation}Negligible Variation}
\begin{example}
\label{exa:strictnegvar}The following is a continuous function that
has negligible conditional variation and not negligible variation
on a set.

Let $C$ denote the Cantor set and $c$ the Cantor-Lebesgue function
on $\left[0,1\right]$. Let $D=C\cup\left(-C\right)$. Let us denote
\[
\delta_{D}\left(x\right)=\begin{cases}
1 & \mbox{if }x\in D\\
\mbox{dist}\left(x,D\right) & \mbox{if }x\notin D\end{cases}.\]
If $\left\{ \left(x_{j},I_{j}\right):1\leq j\leq n\right\} $ is a
tagged partition of $\left[-1,1\right]$ subordinate to $\delta_{D}$,
then \begin{eqnarray*}
0 & = & \left|c\left(\left|1\right|\right)-c\left(\left|-1\right|\right)\right|\\
 & = & \left|\sum_{x_{j}\in D}\Delta_{j}c\left(\left|\cdot\right|\right)+\sum_{x_{j}\notin D}\overbrace{\Delta_{j}c\left(\left|\cdot\right|\right)}^{=0}\right|.\end{eqnarray*}

Thus $c\left(\left|\cdot\right|\right)$ has negligible conditional
variation on $D$.

Suppose there exists $\delta$ so that for $P\left[-1,1\right]=\left\{ \left(x_{j},I_{j}\right):1\leq j\leq n\right\} $
subordinate to $\delta$, $\sum_{x_{j}\in D}\left|\Delta_{j}c\left(\left|\cdot\right|\right)\right|<1$.
Form a tagged partition $P\left[-1,1\right]=\left\{ \left(x_{j},I_{j}\right):1\leq j\leq n\right\} $
as a union of a $Q_{1}\left[-1,0\right]$ and $Q_{2}\left[-1,0\right]$
subordinate to $\min\left\{ \delta_{D}\left(x\right),\delta_{1}\left(x\right)\right\} $.
These three partitions are a fortiori partitions subordinate to $\delta$.
Then, as above, \begin{eqnarray*}
2 & = & \left|c\left(\left|-1\right|\right)-c\left(\left|0\right|\right)\right|+\left|c\left(\left|0\right|\right)-c\left(\left|1\right|\right)\right|\\
 & = & \sum_{\substack{x_{j}\in D\\
x_{j}\in\left[-1,0\right]}
}\left|\Delta_{j}c\left(\left|\cdot\right|\right)\right|+\sum_{\substack{x_{j}\notin D\\
x_{j}\in\left[-1,0\right]}
}\overbrace{\left|\Delta_{j}c\left(\left|\cdot\right|\right)\right|}^{=0}\\
 &  & +\sum_{\substack{x_{j}\in D\\
x_{j}\in\left[0,1\right]}
}\left|\Delta_{j}c\left(\left|\cdot\right|\right)\right|+\sum_{\substack{x_{j}\notin D\\
x_{j}\in\left[0,1\right]}
}\overbrace{\left|\Delta_{j}c\left(\left|\cdot\right|\right)\right|}^{=0}.\end{eqnarray*}

Thus $c\left(\left|\cdot\right|\right)$ does not have negligible
variation on $D$. This argument also shows that $c\left(\left|\cdot\right|\right)$
does not have negligible conditional variation on $D\cap\left[0,1\right]$.
\end{example}

\subsection{\label{sub:Conditions-That-Imply}Conditions That Imply Negligible
Variation}
\begin{lem}
\label{lem:zeroderiv}Let $f:\left[a,b\right]\rightarrow\mathbb{R}$
be such that $f'\left(x\right)=0$ $\forall x\in D\subseteq\left[a,b\right]$.
Then $f$ has negligible variation on $D$.\end{lem}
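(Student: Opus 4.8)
The plan is to verify the definition of negligible variation directly, manufacturing the required gauge from the pointwise derivative hypothesis. Fix $\epsilon > 0$. Since $f'(x) = 0$ for every $x \in D$, for each such $x$ I can choose a radius $\eta(x) > 0$ so that $|f(y) - f(x)| \le \frac{\epsilon}{2(b-a)}|y - x|$ whenever $y \in [a,b]$ and $|y - x| \le \eta(x)$. I would then define a gauge $\delta$ on all of $[a,b]$ by setting $\delta(x) = \eta(x)$ when $x \in D$ and $\delta(x) = 1$ (or any positive value) when $x \notin D$.

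Next I would take an arbitrary tagged partition $P\left[a,b\right] = \left\{\left(x_j, I_j\right) : 1 \le j \le n\right\}$ subordinate to $\delta$ and estimate the sum $\sum_{x_j \in D} \left|\Delta_j f\right|$. For each $j$ with $x_j \in D$, subordination forces $a_j, b_j \in \left(x_j - \eta(x_j), x_j + \eta(x_j)\right)$, so the triangle inequality, splitting at the tag, gives
\[
\left|\Delta_j f\right| \le \left|f(b_j) - f(x_j)\right| + \left|f(x_j) - f(a_j)\right| \le \frac{\epsilon}{2(b-a)}\left(\left|b_j - x_j\right| + \left|x_j - a_j\right|\right).
\]
Because $x_j \in \left[a_j, b_j\right]$, the parenthesized quantity collapses to $b_j - a_j = \left|I_j\right|$, whence $\left|\Delta_j f\right| \le \frac{\epsilon}{2(b-a)}\left|I_j\right|$.

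Summing over the indices with $x_j \in D$ and using that the $I_j$ have non-overlapping interiors and cover $\left[a,b\right]$, so that $\sum_{x_j \in D}\left|I_j\right| \le \sum_{j=1}^n \left|I_j\right| = b - a$, I obtain $\sum_{x_j \in D}\left|\Delta_j f\right| \le \frac{\epsilon}{2(b-a)}(b-a) = \epsilon/2 < \epsilon$, which is exactly the inequality demanded by the definition. I do not expect a genuine obstacle here; the only points requiring care are the identity $\left|b_j - x_j\right| + \left|x_j - a_j\right| = \left|I_j\right|$, valid precisely because the tag lies in its own interval, and the bound $\sum\left|I_j\right| \le b - a$ coming from non-overlap. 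The whole strategy is the standard device of folding the normalization $\epsilon/(b-a)$ into the derivative estimate so that the accumulated error is controlled by the total length.
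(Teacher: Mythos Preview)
Your proof is correct and follows essentially the same approach as the paper's: construct the gauge from the pointwise derivative condition with an $\epsilon/(b-a)$ normalization, then bound $\sum_{x_j\in D}|\Delta_j f|$ by $\sum_{x_j\in D}|I_j|\le b-a$. Your version is slightly more explicit in spelling out the triangle-inequality split at the tag and inserts an extra factor $1/2$ to guarantee strict inequality, but the argument is otherwise identical.
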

\begin{proof}
Choose $\epsilon>0$. Let $\eta_{\epsilon}:D\rightarrow\mathbb{R}^{+}$
be a function such that if $y\in\left[x-\eta_{\epsilon}\left(x\right),x+\eta_{\epsilon}\left(x\right)\right]\cap\left[a,b\right]$
then \[
\left|f\left(y\right)-f\left(x\right)\right|\le\epsilon\left|y-x\right|/\left(b-a\right)\]
 and let \[
\delta\left(x\right)=\begin{cases}
\eta_{\epsilon}\left(x\right) & \mbox{if }x\in D\\
1 & \mbox{if }x\notin D\end{cases}\]

Choose a tagged partition $P\left[a,b\right]=\left\{ \left(x_{j},I_{j}\right):1\leq j\leq n\right\} $
subordinate to $\delta$. Then \[
\sum_{x_{j}\in D}\left|\Delta_{j}f\right|\le\left(\frac{\epsilon}{b-a}\right)\sum_{x_{j}\in D}\left|I_{j}\right|\le\epsilon.\]
\end{proof}
\begin{lem}
\label{lem:finderiv}Let $f:\left[a,b\right]\rightarrow\mathbb{R}$
have finite upper and lower Dini derivatives on a null set $Z$; that
is to say $\overline{D}f\left(x\right):=\limsup_{y\rightarrow x}\left|\frac{f\left(y\right)-f\left(x\right)}{y-x}\right|<\infty$
for all $x\in Z$. Then $f$ has negligible variation on $Z$.\end{lem}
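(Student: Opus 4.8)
The plan is to exploit the pointwise finiteness of $\overline{D}f$ on $Z$ by slicing $Z$ into countably many level sets on which $f$ obeys a genuine Lipschitz-type estimate, and then to use the nullity of $Z$ to make the total length of the tagged intervals in each level set as small as desired. For each positive integer $m$ I would set
\[
Z_{m}=\left\{ x\in Z:\left|f\left(y\right)-f\left(x\right)\right|\le m\left|y-x\right|\mbox{ whenever }0<\left|y-x\right|\le 1/m\right\}.
\]
Since $\overline{D}f\left(x\right)<\infty$ at every $x\in Z$, each such $x$ lies in some $Z_{m}$ (choose $m$ exceeding both the finite value $\overline{D}f\left(x\right)$ and the reciprocal of a radius on which the difference quotient is controlled), so $Z=\bigcup_{m}Z_{m}$; moreover the sets $Z_{m}$ increase with $m$. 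Assigning to each $x\in Z$ its level $m\left(x\right)$, the least $m$ with $x\in Z_{m}$, partitions $Z$ by level.

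Next, fixing $\epsilon>0$, I would use that every $Z_{m}$ is null, being a subset of the null set $Z$, to choose an open set $O_{m}\supseteq Z_{m}$ with $m\left|O_{m}\right|<\epsilon/2^{m}$. The gauge $\delta$ is then built so that it controls each level simultaneously: for $x\in Z$ of level $m=m\left(x\right)$ I would take $\delta\left(x\right)=\min\left\{ 1/\left(2m\right),\mbox{dist}\left(x,O_{m}^{c}\right)\right\}$, which is positive because $x\in Z_{m}\subseteq O_{m}$ and $O_{m}$ is open, and for $x\notin Z$ I would set $\delta\left(x\right)=1$. For a tagged partition subordinate to this gauge, any interval $I_{j}$ whose tag $x_{j}$ has level $m$ then satisfies both $\left|I_{j}\right|<1/m$ and $I_{j}\subseteq O_{m}$.

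The estimate now falls out. The length bound places both endpoints of such an $I_{j}$ within $1/m$ of $x_{j}$, so the defining inequality of $Z_{m}$ gives $\left|\Delta_{j}f\right|\le\left|f\left(b_{j}\right)-f\left(x_{j}\right)\right|+\left|f\left(x_{j}\right)-f\left(a_{j}\right)\right|\le m\left|I_{j}\right|$. Summing over the finitely many tags of level $m$ and using that those intervals have disjoint interiors and lie in $O_{m}$, hence total length at most $\left|O_{m}\right|$, yields $\sum_{m\left(x_{j}\right)=m}\left|\Delta_{j}f\right|\le m\left|O_{m}\right|<\epsilon/2^{m}$. Summing over the finitely many contributing levels gives $\sum_{x_{j}\in Z}\left|\Delta_{j}f\right|<\epsilon$, which is exactly negligible variation on $Z$.

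The one point requiring care is the bookkeeping that lets a single gauge enforce, for every level $m$ at once, both the Lipschitz-radius constraint $\left|I_{j}\right|<1/m$ and the containment $I_{j}\subseteq O_{m}$; once the level function $m\left(x\right)$ is available this is routine, and the rest reduces to the standard comparison of a sum of disjoint interval lengths with the measure of an open set containing them.
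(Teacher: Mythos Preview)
Your proof is correct and follows essentially the same approach as the paper's: decompose $Z$ into level sets on which $f$ satisfies a uniform Lipschitz bound, cover each level set by an open set of suitably small measure, and build a gauge that simultaneously enforces the Lipschitz radius and containment in the open cover. The only cosmetic difference is that the paper stratifies by the value $\lfloor\overline{D}f(x)\rfloor$ and records the Lipschitz radius separately via a function $\eta_{1}$, whereas you fold both the constant and the radius into a single index $m$; the estimates and the summation are otherwise identical.
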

\begin{proof}
Let $\epsilon>0$ and let $Z_{n}=Z\cap\left\{ x:\overline{D}f\left(x\right)\in\left[n,n+1\right)\right\} $.
Also, let $\eta_{1}:Z\rightarrow\mathbb{R}^{+}$ be a function such
that if $y\in\left[x-\eta_{1}\left(x\right),x+\eta_{1}\left(x\right)\right]\cap\left[a,b\right]$
then \[
\left|f\left(y\right)-f\left(x\right)\right|\le\left(1+\left\lfloor \overline{D}f\left(x\right)\right\rfloor \right)\left|y-x\right|\]

Let $C_{n}\supseteq Z_{n}$ be open sets with measure less than $\frac{\epsilon}{2^{n+1}\left(n+2\right)}$.
Define \[
\delta\left(x\right)=\begin{cases}
\min\left\{ \eta_{1}\left(x\right),\mbox{dist}\left(x,C_{n}^{c}\right)\right\}  & \mbox{if }x\in Z_{n}\\
1 & \mbox{if }x\notin Z\end{cases}\]

Choose a tagged partition $P\left[a,b\right]=\left\{ \left(x_{j},I_{j}\right):1\leq j\leq n\right\} $
subordinate to $\delta$. If $x_{j}\in Z_{n}$ for some $j$, then
$\left|\overline{D}f\left(x_{j}\right)\right|\in\left[n,n+1\right)$,
so \[
\left|\Delta_{j}f\right|\le\left|f\left(b_{j}\right)-f\left(x_{j}\right)\right|+\left|f\left(x_{j}\right)-f\left(a_{j}\right)\right|\le\left(n+2\right)\cdot\left|I_{j}\right|.\]
 Therefore \begin{eqnarray*}
\sum_{x_{j}\in Z}\left|\Delta_{j}f\right| & = & \sum_{n=0}^{\infty}\sum_{x_{j}\in Z_{n}}\left|\Delta_{j}f\right|\le\sum_{n=0}^{\infty}\left(n+2\right)\cdot\lambda\left(C_{n}\right)\le\epsilon,\end{eqnarray*}
 proving that $f$ has negligible variation on $Z$.
\end{proof}
Clearly, if a function has negligible variation on a set $N$, then
it has negligible conditional variation on $S\supseteq N$ if and
only if it has negligible conditional variation on $S\backslash N$.

Consider Theorem \ref{thm:subs1}, where $B$ is the set where $\left(F\circ g\right)'=f\circ g\cdot g'$
fails. Let $A_{F}$ and $A_{g}$ be the sets where $F$ and $g$ fail
to have have derivatives and set $A=g^{-1}\left(A_{F}\right)\cup A_{g}$.
By Lemma \ref{zeroset}, $g'\left(x\right)=0$ for almost every $x\in B$.
Similarly, $A_{F}$ and $A_{g}$ have measure zero and $g'$ is defined
almost everywhere, so $g'\left(x\right)=0$ at almost every $x\in A$.
Furthermore,\[
\left(F\circ g\right)'\left(x\right)=\begin{cases}
\left(F'\circ g\cdot g'\right)\left(x\right) & \mbox{if }x\in B\backslash A\\
\left(f\circ g\cdot g'\right)\left(x\right) & \mbox{if }x\in A\backslash B\end{cases}\]
so $\left(F\circ g\right)'$ is zero almost everywhere on $\left(B\backslash A\right)\cup\left(A\backslash B\right)$.
By Lemma \ref{lem:zeroderiv}, $F\circ g$ has negligible variation
on $\left(B\backslash A\right)\cup\left(A\backslash B\right)$. This
proves that if $F\circ g$ has negligible conditional variation on
any set $S$ such that $B\cap A\subseteq S\subseteq B\cup A$, then
it has it on every other such set.

%
{}
\begin{thm}
\label{thm:equivcond}Assume that $g:\left[a,b\right]\rightarrow D$
and $F:D\rightarrow\mathbb{R}$ have derivatives almost everywhere
and that $f=F'$ almost everywhere. Then $F\circ g$ has negligible
variation on the set where $\left(F\circ g\right)'=f\circ g\cdot g'$
fails if and only if $F\circ g$ has negligible variation on each
null set and on the set where $g'$ is zero.\end{thm}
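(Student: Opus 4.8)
The plan is to work directly with the set $B$ on which $\left(F\circ g\right)'=f\circ g\cdot g'$ fails and with $Z_{g}:=\left\{ x:g'\left(x\right)=0\right\} $, decomposing each of $B$, $Z_{g}$, and an arbitrary null set into a piece lying inside $B$ and a piece lying outside it, and then invoking Lemmas~\ref{lem:zeroderiv} and~\ref{lem:finderiv} together with the two elementary facts that negligible variation is inherited by subsets and is subadditive over finite unions. The subadditivity is immediate: given gauges witnessing negligible variation on $E_{1}$ and on $E_{2}$, their pointwise minimum together with the triangle inequality on the Riemann sums witnesses it on $E_{1}\cup E_{2}$. The single structural fact I would extract first is that $B\setminus Z_{g}$ is a null set; this is immediate from Lemma~\ref{zeroset}, which gives $g'=0$ almost everywhere on $B$.

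For the direction assuming $F\circ g$ has negligible variation on every null set and on $Z_{g}$, I would write $B=\left(B\cap Z_{g}\right)\cup\left(B\setminus Z_{g}\right)$. The first piece is a subset of $Z_{g}$, hence carries negligible variation, and the second is null, hence carries it by hypothesis; subadditivity then yields negligible variation on $B$.

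For the converse, assuming negligible variation on $B$, I would establish the two conclusions separately. For $Z_{g}$, decompose $Z_{g}=\left(Z_{g}\cap B\right)\cup\left(Z_{g}\setminus B\right)$: the first piece inherits negligible variation as a subset of $B$, while on the second the chain-rule identity holds and $g'=0$, forcing $\left(F\circ g\right)'=f\circ g\cdot g'=0$ there, so Lemma~\ref{lem:zeroderiv} applies. For an arbitrary null set $N$, decompose $N=\left(N\cap B\right)\cup\left(N\setminus B\right)$: again the first piece is a subset of $B$, and on $N\setminus B$ the identity $\left(F\circ g\right)'=f\circ g\cdot g'$ holds with both sides defined, so $F\circ g$ is differentiable, hence has finite upper Dini derivative, at every point of the null set $N\setminus B$, whence Lemma~\ref{lem:finderiv} applies. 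In each case subadditivity over the union finishes the argument.

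The main obstacle is the converse direction on a general null set $N$: there $N\setminus B$ need not be a set on which $\left(F\circ g\right)'$ vanishes, so Lemma~\ref{lem:zeroderiv} is unavailable. The key observation is that membership outside $B$ already forces differentiability of $F\circ g$, which converts the problem into one about finite Dini derivatives on a null set and lets Lemma~\ref{lem:finderiv} do the work. The two lemmas are genuinely complementary here: Lemma~\ref{lem:zeroderiv} handles the possibly-positive-measure set $Z_{g}\setminus B$, where the derivative is zero, and Lemma~\ref{lem:finderiv} handles the null set $N\setminus B$, where the derivative is merely finite.
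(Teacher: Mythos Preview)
Your proposal is correct and follows essentially the same route as the paper: you use the same decompositions of $B$, $Z_g$, and an arbitrary null set into the part inside $B$ and the part outside it, invoke Lemma~\ref{zeroset} to see that $B\setminus Z_g$ is null, and apply Lemmas~\ref{lem:zeroderiv} and~\ref{lem:finderiv} exactly where the paper does. The only cosmetic difference is that you make the subadditivity of negligible variation explicit, whereas the paper uses it tacitly.
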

\begin{proof}
Let $B$ again be the set where $\left(F\circ g\right)'=f\circ g\cdot g'$
fails and assume $F\circ g$ has negligible variation on $B$. It
then has negligible variation on all subsets of $B$ as well, including
$B\cap\left\{ x:g'\left(x\right)=0\right\} $. Since $\left(F\circ g\right)'=f\circ g\cdot g'$
on the complement of $B$, then, by Lemma \ref{lem:zeroderiv}, $F\circ g$
has negligible variation on $\left\{ x:g'\left(x\right)=0\right\} \backslash B$.
Therefore $F\circ g$ has negligible variation on the set $\left\{ x:g'\left(x\right)=0\right\} $.

Similarly, for any null set $Z$, $F\circ g$ will have negligible
variation on $Z\cap B$, since that is a subset of $B$. Also, by
Lemma \ref{lem:finderiv}, $F\circ g$ will have negligible variation
on $Z\backslash B$. Therefore $F\circ g$ has negligible variation
on $Z$.

Conversely, by Lemma \ref{zeroset}, there exists a null set $Z$
and a set $E\subseteq\left\{ x:g'\left(x\right)=0\right\} $ such
that $B=Z\cup E$. Consequently if $F\circ g$ has negligible variation
on each null set, it must have it on $Z$ in particular. Also, if
it has negligible variation on $\left\{ x:g'\left(x\right)=0\right\} $,
then it has it on its subsets such as $E$. Therefore $F\circ g$
has negligible variation on $B$.
\end{proof}
We may therefore restate Corollary \ref{cor:lastcor} in the following
equivalent form.
\begin{cor}
Assume that $g:\left[a,b\right]\rightarrow\mathbb{R}$ is differentiable
almost everywhere and that $f:\mathbb{R}\rightarrow\mathbb{R}$ is
HK integrable on every interval with endpoints in the range of $g$.
Define $F\left(x\right):=(HK)\int_{g\left(a\right)}^{x}f\left(u\right)du$.
Then $\left(f\circ g\right)\cdot g'$ is HK integrable on $\left[a,b\right]$
and the change of variables formula \[
(HK)\int_{g\left(\alpha\right)}^{g\left(\beta\right)}f\left(u\right)du=(HK)\int_{\alpha}^{\beta}f\left(g\left(s\right)\right)g'\left(s\right)ds\]
holds for every $\left[\alpha,\beta\right]\subseteq\left[a,b\right]$
if and only if $F\circ g$ has negligible variation on each null set
and on the set where $g'$ is zero.
\end{cor}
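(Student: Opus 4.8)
The plan is to derive this corollary directly from Corollary \ref{cor:lastcor} and Theorem \ref{thm:equivcond}, since the two statements differ only in how the necessary and sufficient condition is phrased. Corollary \ref{cor:lastcor} already establishes that $\left(f\circ g\right)\cdot g'$ is HK integrable and that the change of variables formula holds for every $\left[\alpha,\beta\right]\subseteq\left[a,b\right]$ if and only if $F\circ g$ has negligible variation on the set where $\left(F\circ g\right)'=f\circ g\cdot g'$ fails. Thus the only work remaining is to replace this condition by the equivalent one supplied by Theorem \ref{thm:equivcond}.

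To apply Theorem \ref{thm:equivcond}, I would first check that its hypotheses are met. By assumption $g$ is differentiable almost everywhere, and $F$ is defined on an interval $D$ containing the range of $g$ by $F\left(x\right)=(HK)\int_{g\left(a\right)}^{x}f\left(u\right)du$. Since $F$ is by construction the indefinite HK integral of $f$, the Fundamental Theorem of Calculus (Theorem \ref{thm:bartle}) guarantees a set $E$ on which $F'=f$ whose complement in $D$ is null; hence $F$ is differentiable almost everywhere and $f=F'$ almost everywhere. These are exactly the standing assumptions of Theorem \ref{thm:equivcond}.

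With the hypotheses verified, Theorem \ref{thm:equivcond} tells us that $F\circ g$ has negligible variation on the set where $\left(F\circ g\right)'=f\circ g\cdot g'$ fails if and only if $F\circ g$ has negligible variation on each null set and on the set where $g'$ is zero. Substituting this equivalence into the statement of Corollary \ref{cor:lastcor} yields the corollary exactly as stated.

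Since this argument is a pure restatement, I do not anticipate a genuine obstacle; the only point requiring care is confirming that the differentiability hypotheses of Theorem \ref{thm:equivcond}---in particular that $F$ is differentiable almost everywhere---are inherited from the construction of $F$ as an indefinite HK integral, which is where the Fundamental Theorem of Calculus enters.
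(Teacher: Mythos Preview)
Your proposal is correct and follows precisely the paper's approach: the paper presents this corollary as an immediate restatement of Corollary~\ref{cor:lastcor} via the equivalence established in Theorem~\ref{thm:equivcond}, and you do exactly that, with the added care of verifying (through Theorem~\ref{thm:bartle}) that $F$ inherits the almost-everywhere differentiability and $F'=f$ hypotheses needed for Theorem~\ref{thm:equivcond}.
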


\subsection{\label{sub:Conditions-Implied-by}Implications of Negligible Variation}

Functions that satisfy the conditions of Theorem \ref{thm:svzero}
on a set $E$ have a derivative equal to zero almost everywhere on
$E$, so it follows from Lemma~\ref{lem:zeroderiv} that these functions
have negligible variation on a subset of $E$ of full measure. We
show next that the conclusion of Theorem~\ref{thm:svzero} holds
for this larger class of functions.
\begin{thm}
\label{thm:negvarderiv}If $g:\left[a,b\right]\rightarrow\mathbb{R}$
has negligible variation on $E\subseteq\left[a,b\right]$, then $\overline{D}g\left(x\right)=0$
almost everywhere on $E$.\end{thm}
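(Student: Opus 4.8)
The plan is to prove that the exceptional set $\{x\in E:\overline{D}g(x)>0\}$ is null, from which $\overline{D}g=0$ almost everywhere on $E$ follows at once. Writing this set as the countable union $\bigcup_{k\ge 1}E_{k}$ with $E_{k}=\{x\in E:\overline{D}g(x)>1/k\}$, it suffices to show that each $E_{k}$ has outer measure zero, since a countable union of sets of outer measure zero again has outer measure zero. I would argue by contradiction, assuming $\lambda^{*}(E_{k})=m>0$ for some $k$, where $\lambda^{*}$ denotes outer Lebesgue measure.

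The engine of the argument is the Vitali Covering Theorem combined with the gauge furnished by negligible variation. First I would invoke the hypothesis with $\epsilon=m/(3k)$ to obtain a gauge $\delta$ such that $\sum_{x_{j}\in E}\left|\Delta_{j}g\right|<\epsilon$ for every tagged partition of $\left[a,b\right]$ subordinate to $\delta$. For each $x\in E_{k}$ the inequality $\overline{D}g(x)>1/k$ means there are points $y$ arbitrarily close to $x$ with $\left|g(y)-g(x)\right|>\left|y-x\right|/k$; imposing in addition $\left|y-x\right|<\delta(x)$, the closed intervals with endpoints $x$ and $y$, tagged at $x$, form a Vitali cover of $E_{k}$ by intervals of arbitrarily small length, each contained in $\left(x-\delta(x),x+\delta(x)\right)$. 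By the Vitali Covering Theorem I would extract finitely many pairwise non-overlapping such intervals $I_{1},\dots,I_{N}$, tagged at $x_{1},\dots,x_{N}\in E_{k}$, whose union misses at most outer measure $m/2$ of $E_{k}$; in particular $\sum_{i=1}^{N}\left|I_{i}\right|>m/2$, while $\left|\Delta_{i}g\right|>\left|I_{i}\right|/k$ by construction.

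The final step is to complete $\{(x_{i},I_{i})\}_{i=1}^{N}$ to a tagged partition $P$ of $\left[a,b\right]$ subordinate to $\delta$: the set $\left[a,b\right]\setminus\bigcup_{i}\mathrm{int}\,I_{i}$ is a finite union of closed intervals, and Cousin's Lemma supplies a $\delta$-fine tagged partition of each such gap, whose union with the $I_{i}$ is the desired $P$. Since every summand in the negligible-variation sum is nonnegative and the tags $x_{i}$ lie in $E_{k}\subseteq E$, we obtain
\[
\frac{m}{3k}=\epsilon>\sum_{x_{j}\in E}\left|\Delta_{j}g\right|\ge\sum_{i=1}^{N}\left|\Delta_{i}g\right|>\frac{1}{k}\sum_{i=1}^{N}\left|I_{i}\right|>\frac{m}{2k},
\]
a contradiction. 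Hence each $E_{k}$ is null and the theorem follows. The only delicate point is the bookkeeping tying the three ingredients together: the Vitali selection must be carried out inside the balls prescribed by the single gauge $\delta$ fixed at the outset, and the gap-filling must not spoil the lower bound—this is guaranteed precisely because the negligible-variation sum is a sum of absolute values, so the extra tags introduced by Cousin's Lemma can only enlarge it.
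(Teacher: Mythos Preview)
Your argument is correct and follows essentially the same route as the paper: a Vitali cover built from difference quotients exceeding the threshold, chosen inside the gauge supplied by negligible variation, a finite disjoint subfamily completed to a $\delta$-fine tagged partition via Cousin's Lemma, and a comparison of $\sum|\Delta_{j}g|$ against $\sum|I_{j}|$. The only cosmetic differences are that the paper argues directly (showing $\lambda^{*}(E_{\gamma})<\epsilon$ for every $\epsilon>0$) rather than by contradiction, and it reduces to the one-sided Dini derivative $\overline{D}_{+}g$ by symmetry, whereas you treat the two-sided $\overline{D}g$ in one pass.
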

\begin{proof}
Let $\overline{D}_{+}g\left(x\right):=\limsup_{h\rightarrow0+}\left|\frac{g\left(x+h\right)-g\left(x\right)}{h}\right|$
and $E_{\gamma}=\left\{ x\in E\backslash\left\{ b\right\} :\overline{D}_{+}g\left(x\right)>\gamma\right\} $.
For reasons of symmetry, it suffices to show that $\lambda\left(E_{0}\right)=0$.
Furthermore, $\lambda^{*}\left(E_{0}\right)\le\sum_{n=1}^{\infty}\lambda^{*}\left(E_{\frac{1}{n}}\right)$,
so it is sufficient to show $\lambda^{*}\left(E_{\gamma}\right)=0$
for every $\gamma>0$.

Choose $\gamma,\epsilon>0$. Let $\delta$ be a gauge such that for
any tagged partition $P\left[a,b\right]=\left\{ \left(x_{j},I_{j}\right):1\leq j\leq n\right\} $
subordinate to $\delta$, $\sum_{x_{j}\in E}\left|\Delta_{j}g\right|<\epsilon\gamma/4$.
Let \[
C=\left\{ \left[\alpha,\beta\right]\subseteq\left[a,b\right]:0<\beta-\alpha<\delta\left(\alpha\right)\mbox{ and }\alpha\in E_{\gamma}\mbox{ and }\left|\frac{g\left(\beta\right)-g\left(\alpha\right)}{\beta-\alpha}\right|>\gamma/2\right\} .\]
 Since $C$ is a Vitali cover of $E_{\gamma}$, there is a finite
collection $D$ of disjoint intervals in $C$ such that $\lambda^{*}\left(E_{\gamma}\backslash\bigcup D\right)<\epsilon/2$.

Because $D$ is a finite collection of closed intervals, there exists
a finite collection $O$ of disjoint open (in $\left[a,b\right]$)
intervals complementing $D$. By Cousin's Lemma, for each $\left(\alpha,\beta\right)\in O$
there exists a tagged partition $Q\left[\alpha,\beta\right]$ subordinate
to $\delta$. Let \[
P\left[a,b\right]=\left\{ \left(\alpha,\left[\alpha,\beta\right]\right):\left[\alpha,\beta\right]\in D\right\} \cup\left(\bigcup_{\left(\alpha,\beta\right)\in O}Q\left[\alpha,\beta\right]\right).\]
 Hence $P\left[a,b\right]$ is a tagged partition $\left\{ \left(x_{j},I_{j}\right):1\leq j\leq n\right\} $
subordinate to $\delta$. So \[
\epsilon\gamma/4>\sum_{x_{j}\in E}\left|\Delta_{j}g\right|\ge\sum_{I_{j}\in D}\left|\Delta_{j}g\right|\ge\sum_{I_{j}\in D}\left|I_{j}\right|\cdot\gamma/2.\]
 Therefore $\lambda\left(\bigcup D\right)=\sum_{I_{j}\in D}\left|I_{j}\right|<\epsilon/2$
and so $\lambda^{*}\left(E_{\gamma}\right)\le\lambda\left(\bigcup D\right)+\lambda^{*}\left(E_{\gamma}\backslash\bigcup D\right)<\epsilon$.
Since $\epsilon$ was arbitrary, $0=\lambda\left(E_{\gamma}\right)$.
\end{proof}
Theorem \ref{thm:negvarderiv} and Lemma \ref{lem:zeroderiv} show
that $f:\left[a,b\right]\rightarrow\mathbb{R}$ has negligible variation
on a set $E\subseteq\left[a,b\right]$ if and only if there exists
a null set $Z\subseteq E$ such that $f$ has negligible variation
on $Z$ and $f'\left(x\right)=0$ for all $x\in E\backslash Z$.
\begin{thm}
If $g:\left[a,b\right]\rightarrow\mathbb{R}$ has negligible variation
on $E\subseteq\left[a,b\right]$, then $\lambda\left(g\left(E\right)\right)=0$.\end{thm}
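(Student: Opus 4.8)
The plan is to bound the Lebesgue outer measure $\lambda^*(g(E))$ directly from the definition of negligible variation, showing it is at most an arbitrary multiple of $\epsilon$. Fix $\epsilon>0$ and let $\delta$ be a gauge on $\left[a,b\right]$ such that every tagged partition subordinate to $\delta$ satisfies $\sum_{x_j\in E}\left|\Delta_j g\right|<\epsilon$. The fundamental difficulty is that negligible variation controls only the endpoint differences $\Delta_j g=g\left(b_j\right)-g\left(a_j\right)$ of a partition, whereas estimating $\lambda^*\left(g\left(E\right)\right)$ requires controlling the oscillation of $g$ across all of $E$; worse, a single subordinate partition exhibits only finitely many points of $E$ as interval endpoints. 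The device that resolves this is to decompose $E$ according to the size of the gauge.

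To that end I would set $E^{(m)}=\left\{x\in E:\delta\left(x\right)>1/m\right\}$, so that the $E^{(m)}$ increase to $E$. For a fixed $m$, partition $\left[a,b\right]$ into finitely many consecutive closed blocks $K_1,\dots,K_M$ of length less than $1/m$, and write $S_i=E^{(m)}\cap K_i$. The point of this construction is that whenever $x\in S_i$, every subinterval of $K_i$ having $x$ as an endpoint is automatically subordinate to $\delta$ at the tag $x$, since its length is below $1/m<\delta\left(x\right)$. This frees me to place a tagged subinterval of $K_i$ anywhere I like within $S_i$.

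Next I would capture the oscillation $\operatorname{osc}\left(g|_{S_i}\right)=\sup_{x,x'\in S_i}\left|g\left(x\right)-g\left(x'\right)\right|$. For each $i$ I choose $x_i,x_i'\in S_i$ realizing this oscillation to within $\epsilon/2^i$, order them so that $x_i\le x_i'$, and form the tagged interval $J_i=\left[x_i,x_i'\right]\subseteq K_i$ with tag $x_i\in E$. Since the blocks have disjoint interiors, so do the $J_i$, and by Cousin's Lemma I can complete them to a tagged partition $P$ of $\left[a,b\right]$ subordinate to $\delta$ by filling the remaining gaps. Applying the negligible variation estimate to $P$ and retaining only the terms coming from the $J_i$ gives $\sum_i\left|g\left(x_i'\right)-g\left(x_i\right)\right|<\epsilon$, whence $\sum_i\operatorname{osc}\left(g|_{S_i}\right)<2\epsilon$. (In particular each oscillation is finite, which one sees directly as well, since a single pair with difference exceeding $\epsilon$ would already contradict the choice of $\delta$.)

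Finally, since each $g\left(S_i\right)$ lies in an interval of length $\operatorname{osc}\left(g|_{S_i}\right)$, subadditivity yields $\lambda^*\left(g\left(E^{(m)}\right)\right)\le\sum_i\operatorname{osc}\left(g|_{S_i}\right)<2\epsilon$, uniformly in $m$; as $g\left(E\right)=\bigcup_m g\left(E^{(m)}\right)$ is an increasing union, continuity from below of Lebesgue outer measure gives $\lambda^*\left(g\left(E\right)\right)\le 2\epsilon$, and letting $\epsilon\to0$ finishes the proof. I expect the main obstacle to be exactly the bookkeeping of the third paragraph, namely arranging that the oscillation-realizing pairs from all the blocks sit inside one partition subordinate to $\delta$, so that the single bound $\sum_{x_j\in E}\left|\Delta_j g\right|<\epsilon$ controls the total oscillation; the gauge-size decomposition into the $E^{(m)}$ is precisely what legitimizes this. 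One could instead invoke the characterization preceding Theorem~\ref{thm:equivcond} to write $E$ as a null set together with a set on which $g'=0$, but the null piece still demands essentially this covering argument, so the direct treatment above is cleaner and self-contained.
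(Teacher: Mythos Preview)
Your argument is correct. The stratification $E^{(m)}=\{x\in E:\delta(x)>1/m\}$ together with a grid of mesh $<1/m$ indeed lets every oscillation--realizing pair in a block serve as a $\delta$--subordinate tagged interval, and Cousin's lemma completes these to a partition on which the negligible--variation bound controls the total oscillation; the degenerate cases ($S_i=\emptyset$ or $x_i=x_i'$) are harmless, and continuity from below of Lebesgue outer measure handles the passage from $E^{(m)}$ to $E$.

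The paper takes a different route. Rather than stratifying by the size of $\delta$ and using a fixed grid, it covers $E$ directly: for each $x\in E$ it selects $h_x$ with $|h_x|\le\delta(x)$ nearly realizing the local oscillation $s(x)=\sup_{|h|\le\eta_x}|g(x+h)-g(x)|$, and then invokes the one--dimensional Besicovitch covering theorem to extract two disjoint subfamilies of the intervals $(x-|h_x|,x+|h_x|)$ that still cover $E$. Each finite subfamily is completed (via Cousin) to a subordinate tagged partition, giving $\sum s(y_i)\le 2\epsilon$ and hence $\lambda^*(g(E))\le 8\epsilon$ in one stroke. Your approach trades the appeal to Besicovitch for the extra limit in $m$ and the (standard) continuity from below of $\lambda^*$; it is more elementary and entirely self--contained, while the paper's version avoids the stratification and gets the bound on $\lambda^*(g(E))$ directly without a limiting argument.
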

\begin{proof}
We can clearly assume that $a,b\notin E$.

Choose $\epsilon>0$ and let $\delta$ be a gauge such that for any
tagged partition $P\left[a,b\right]=\left\{ \left(x_{j},I_{j}\right):1\leq j\leq m\right\} $
subordinate to $\delta$, $\sum_{x_{j}\in E}\left|\Delta_{j}g\right|<\epsilon$.
Let $\eta_{x}=\min\left\{ b-x,x-a,\delta\left(x\right)\right\} $.
Thus for $x\in E$ \[
\sup_{\left|h\right|\le\eta_{x}}\left|g\left(x+h\right)-g\left(x\right)\right|\le\epsilon,\]
 and we denote this finite-valued supremum as $s\left(x\right)$.
For every $x\in E$, choose $h_{x}$ so that $\left|g\left(x+h_{x}\right)-g\left(x\right)\right|\ge s\left(x\right)/2$
and $\left|h_{x}\right|\le\eta_{x}$.

Define $C\left(T\right)=\left\{ \left(x-\left|h_{x}\right|,x+\left|h_{x}\right|\right):x\in T\right\} $.
$C\left(E\right)$ is a Besicovitch cover of $E$, so there exist
two sequences%
\footnote{While the statement of Besicovitch's Covering Theorem is usually given
in $\mathbb{R}^{d}$ and with only rough bounds for the number of
sequences necessary, it is not too hard to show that two sequences
suffice for $\mathbb{R}^{1}$.%
} (possibly finite) of distinct points from $E$, $\left\{ y_{i}\right\} $
and $\left\{ z_{i}\right\} $, such that $C\left(\left\{ y_{i}\right\} \right),C\left(\left\{ z_{i}\right\} \right)$
each consist of disjoint intervals and $C\left(\left\{ y_{i}\right\} \right)\cup C\left(\left\{ z_{i}\right\} \right)$
covers $E$.

Since the closure of $C\left(\left\{ y_{i}\right\} _{i=1}^{n}\right)$
is a finite union of closed intervals, there exists a finite collection
$O_{n}$ of disjoint open (in $\left[a,b\right]$) intervals complementing
it. Also, for each $\left(\alpha,\beta\right)\subseteq\left[a,b\right]$
there exists $Q\left[\alpha,\beta\right]$ subordinate to $\delta$.
Let \begin{eqnarray*}
P_{n}\left[a,b\right] & = & \left(\bigcup_{i=1}^{n}\left\{ \left(y_{i},\left[y_{i}-\left|h_{x}\right|,y_{i}\right]\right),\left(y_{i},\left[y_{i},y_{i}+\left|h_{x}\right|\right]\right)\right\} \right)\\
 &  & \cup\left(\bigcup_{\left(\alpha,\beta\right)\in O_{n}}Q\left[\alpha,\beta\right]\right).\end{eqnarray*}
 Hence $P_{n}\left[a,b\right]$ is a tagged partition subordinate
to $\delta$ and so \[
\epsilon>\sum_{i=1}^{n}\left|g\left(y_{i}+h_{x}\right)-g\left(y_{i}\right)\right|\ge\sum_{i=1}^{n}s\left(y_{i}\right)/2.\]
 Define $D_{n}=\bigcup_{i=1}^{n}\left[g\left(y_{i}\right)-s\left(y_{i}\right),g\left(y_{i}\right)+s\left(y_{i}\right)\right]$.
Then $D_{n+1}\supseteq D_{n}$ and, from the inequality above, $4\epsilon>\lambda\left(D_{n}\right)$
for all $n$. Additionally, $\bigcup_{n=1}^{\infty}D_{n}\supseteq g\left(C\left(\left\{ y_{i}\right\} \right)\right)$.
Thus $4\epsilon\ge\lambda\left(\bigcup_{n=1}^{\infty}D_{n}\right)\ge\lambda^{*}\left(g\left(C\left(\left\{ y_{i}\right\} \right)\right)\right)$.
The same argument applies for $\left\{ z_{i}\right\} $, so \[
8\epsilon\ge\lambda^{*}\left(g\left(C\left(\left\{ y_{i}\right\} \right)\cup C\left(\left\{ z_{i}\right\} \right)\right)\right)\ge\lambda^{*}\left(g\left(E\right)\right).\]
 Since $\epsilon$ was arbitrarily small, $\lambda\left(g\left(E\right)\right)=0$.
\end{proof}
\begin{bibdiv}
\begin{biblist}
\bib{key-1}{book}{    author={Bartle, Robert G.},    title={A modern theory of integration},    series={Graduate Studies in Mathematics},    volume={32},    publisher={American Mathematical Society},    place={Providence, RI},    date={2001},    pages={xiv+458}, label={Bartle},    isbn={0-8218-0845-1},    review={\MR{1817647 (2002d:26001)}}, }
\bib{key-8}{book}{    author={Bressoud, David M.},    title={A radical approach to Lebesgue's theory of integration},    series={MAA Textbooks},    publisher={Cambridge University Press},    place={Cambridge},    date={2008},    pages={xiv+329},    isbn={978-0-521-71183-8},    isbn={0-521-71183-5},    review={\MR{2380238 (2008j:00001)}}, label={Bressoud},} 
\bib{key-7}{article}{    author={Goodman, Gerald S.},    title={Integration by substitution},    journal={Proc. Amer. Math. Soc.},    volume={70},    date={1978},    number={1},    pages={89--91},    issn={0002-9939},    review={\MR{0476952 (57 \#16497)}}, label={Goodman},}
\bib{key-6}{book}{    author={Gordon, Russell A.},    title={The integrals of Lebesgue, Denjoy, Perron, and Henstock},    series={Graduate Studies in Mathematics},    volume={4},    publisher={American Mathematical Society},    place={Providence, RI},    date={1994},    pages={xii+395},    isbn={0-8218-3805-9},    review={\MR{1288751 (95m:26010)}}, label={Gordon}}
\bib{key-5}{article}{    author={Krzy{\.z}ewski, K.},    title={On change of variable in the Denjoy-Perron integral. I},    journal={Colloq. Math.},    volume={9},    date={1962},    pages={99--104},    issn={0010-1354},    review={\MR{0132816 (24 \#A2652)}}, label={Krz1}}
\bib{Krz2}{article}{    author={Krzy{\.z}ewski, K.},    title={On change of variable in the Denjoy-Perron integral. II},    journal={Colloq. Math.},    volume={9},    date={1962},    pages={317--323},    issn={0010-1354},    review={\MR{0142714 (26 \#283)}}, label={Krz2}}
\bib{key-4}{book}{    author={Lee, Peng Yee},    author={V{\'y}born{\'y}, Rudolf},    title={Integral: an easy approach after Kurzweil and Henstock},    series={Australian Mathematical Society Lecture Series},    volume={14},    publisher={Cambridge University Press},    place={Cambridge},    date={2000},    pages={xii+311},    isbn={0-521-77968-5},    review={\MR{1756319 (2001h:26011)}}, label={Lee \& Vyborny}}
\bib{key-2}{article}{    author={Serrin, James},    author={Varberg, Dale E.},    title={A general chain rule for derivatives and the change of variables    formula for the Lebesgue integral},    journal={Amer. Math. Monthly},    volume={76},    date={1969},    pages={514--520},    issn={0002-9890},    review={\MR{0247011 (40 \#280)}}, label={S\&V}}
\bib{key-9}{article}{    author={V{\'y}born{\'y}, Rudolf},    title={Some applications of Kurzweil-Henstock integration},    journal={Math. Bohem.},    volume={118},    date={1993},    number={4},    pages={425--441},    issn={0862-7959},    review={\MR{1251885 (94k:26014)}}, label={Vyborny}}
\end{biblist}
\end{bibdiv}
\end{document}